\DeclareMathOperator{\lk}{lk} \DeclareMathOperator{\cone}{Cone}
 \DeclareMathOperator{\id}{id}
 \DeclareMathOperator{\im}{Im}
\DeclareMathOperator{\ver}{Vert}
\newcommand{\Zo}{\mathbb{Z}}
\newcommand{\Ro}{\mathbb{R}}
\newcommand{\Co}{\mathbb{C}}
\newcommand{\Qo}{\mathbb{Q}}
\newcommand{\ko}{\Bbbk}
\newcommand{\ang}{\raisebox{-2pt}{\pigpenfont G}}
\newcommand{\simc}{\!\!\sim}
\newcommand{\be}{\beta}
\newcommand{\br}{\widetilde{\beta}}
\newcommand{\ft}{\widehat{f}}
\newcommand{\chir}{\widetilde{\chi}}
\newcommand{\eqd}{\stackrel{\text{\tiny def}}{=}}
\newcommand{\minel}{\hat{0}}
\newcommand{\sta}[1]{(\ast_{#1})}
\newcommand{\E}[1]{(E_{#1})}
\newcommand{\dif}[1]{(d_{#1})}
\newcommand{\difm}[1]{d_{#1}^{-}}
\newcommand{\Ea}[1]{(\dot{E}_{#1})}
\newcommand{\difa}[1]{(\dot{d}_{#1})}
\newcommand{\fa}{\dot{f}}
\newcommand{\Ead}[2]{(\dot{E}^{#2}_{#1})}
\newcommand{\Hr}{\widetilde{H}}
\newcommand{\dd}{\partial}
\newcounter{stmcounter}[section]
\newcounter{thcounter}
\numberwithin{equation}{section}
\theoremstyle{plain}
\newtheorem{cor}[stmcounter]{Corollary}
\newtheorem{stm}[stmcounter]{Statement}
\newtheorem{thm}[thcounter]{Theorem}
\newtheorem{prop}[stmcounter]{Proposition}
\newtheorem{lemma}[stmcounter]{Lemma}
\newtheorem{defin}[stmcounter]{Definition}
\newtheorem{claim}[stmcounter]{Claim}
\theoremstyle{definition}
\newtheorem{rem}[stmcounter]{Remark}
\newtheorem{con}[stmcounter]{Construction}
\begin{document}

\title[Locally standard torus actions and $h'$-vectors]{Locally
standard torus actions and $h'$-vectors of simplicial posets}

\author[Anton Ayzenberg]{Anton Ayzenberg}
\address{Department of Mathematics, Osaka City University, Sumiyoshi-ku, Osaka 558-8585, Japan.}
\email{ayzenberga@gmail.com}

\date{\today}
\thanks{The author is supported by the JSPS postdoctoral fellowship program.}
\subjclass[2010]{Primary 57N65, 55R20; Secondary 05E45, 06A07,
18G40} \keywords{locally standard torus action, orbit type
filtration, Buchsbaum simplicial poset, coskeleton filtration,
h'-vector, h''-vector, homology spectral sequence}

\begin{abstract}
We consider the orbit type filtration on a manifold $X$ with
locally standard action of a compact torus and the corresponding
homological spectral sequence $\E{X}^r_{*,*}$. If all proper faces
of the orbit space $Q=X/T$ are acyclic and free part of the action
is trivial, this spectral sequence can be described in full.
The ranks of diagonal terms are equal to the $h'$-numbers of the
Buchsbaum simplicial poset $S_Q$ dual to $Q$. Betti numbers of $X$
depend only on the orbit space $Q$ but not on the characteristic
function. If $X$ is a slightly different object, namely the model
space $X=(P\times T^n)/~\simc$ where $P$ is a cone over Buchsbaum
simplicial poset $S$, we prove that $\dim \E{X}^{\infty}_{p,p} =
h''_p(S)$. This gives a topological evidence for the fact that
$h''$-numbers of Buchsbaum simplicial posets are nonnegative.
\end{abstract}

\maketitle


%
%
%
%
%
%
%

\section{Introduction}\label{SecIntro}

An action of a compact torus $T^n$ on a smooth compact manifold
$M$ of dimension $2n$ is called locally standard if it is locally
modeled by the standard representation of $T^n$ on $\Co^n$. The
orbit space $Q=M/T^n$ is a manifold with corners. Every manifold
with locally standard torus action is equivariantly homeomorphic
to the quotient construction $X=Y/\simc$, where $Y$ is a principal
$T^n$-bundle over $Q$ and $\sim$ is an equivalence relation given
by a characteristic function on $Q$ (see \cite{Yo}).
%

In the case when all faces of the orbit space (including $Q$
itself) are acyclic, Masuda and Panov \cite{MasPan} proved that
$H^*_T(M;\Zo)\cong \Zo[S_Q]$ and $H^*(M;\Zo)\cong
\Zo[S_Q]/(l.s.o.p)$, where $S_Q$ is a simplicial poset dual to
$Q$; $\Zo[S_Q]$ is the face ring with even grading; and
$(l.s.o.p)$ is the system of parameters of degree $2$ determined
by the characteristic function. In this situation $S_Q$ is a
Cohen--Macaulay simplicial poset, so $(l.s.o.p)$ is actually a
regular sequence in the ring $\Zo[S_Q]$. In particular this
implies $\dim H^{2j}(M)=h_j(S_Q)$ and $H^{2j+1}(M)=0$.

These considerations generalize similar results for quasitoric
manifolds, complete smooth toric varieties, and symplectic toric
manifolds which were known before. One can see that there are many
examples of manifolds $M$ whose orbit spaces are acyclic.
Nevertheless, several constructions had appeared in the last years
providing natural examples of manifolds with nontrivial topology
of the orbit space. These constructions include for example toric
origami manifolds \cite{SGP} and toric log symplectic manifolds
\cite{GLPR}.

It seems that the most reasonable assumption which is weaker than
acyclicity of all faces, but still allows for explicit
calculations is the following. We assume that every \emph{proper}
face of $Q$ is acyclic, and $Y$ is a trivial $T^n$-bundle:
$Y=Q\times T^n$. This paper is the second in a series of works,
where we study homological structure of $M$ under these assumption
by using the orbit type filtration. The general scopes of this
work are described in the preprint \cite{AyTot}.

It is convenient to work with a quotient construction $X=(Q\times
T^n)/\simc$ instead of~$M$. The orbit type filtration $X_0\subset
X_1\subset \ldots\subset X_n$ covers the natural filtration
$Q_0\subset Q_1\subset \ldots\subset Q_n$ of $Q$, and is covered
by a filtration $Y_0\subset Y_1\subset \ldots\subset Y_n$ of $Y$,
where $Y_i=Q_i\times T^n$. In the previous paper \cite{AyV1} we
proved that homological spectral sequences associated with
filtrations on $Y$ and $X$ are closely related. Namely, there is
an isomorphism of the second pages $f_*^2\colon \E{Y}^2_{p,q}\to
\E{Y}^2_{p,q}$ for $p>q$, when $Q$ has acyclic proper faces.

In this paper we calculate the ranks of groups in the spectral
sequence and Betti numbers of $X$. Since $Y=Q\times T^n$, the
spectral sequence $\E{Y}^*_{*,*}$ is isomorphic to
$\E{Q}^*_{*,*}\otimes H_*(T^n)$. The structure of $\E{Q}^*_{*,*}$
can be explicitly described. This is done in
Section~\ref{secTopolQ}. As a technical tool, we introduce the
modified spectral sequence $\Ea{Q}^*$ which coincides with
$\E{Q}^*$ from the second page, and whose first page
$\Ea{Q}^1_{*,*}$ in a certain sense lies in between
$\E{Q}^1_{*,*}$ and $\E{Q}^2_{*,*}$. Similar constructions of
modified spectral sequences $\Ea{Y}^*$ and $\Ea{X}^*$ are
introduced for $Y$ and $X$ in Section \ref{SecTorusActions}.

The induced map $\fa_*^1\colon \Ea{Y}^1_{p,q}\to \Ea{X}^1_{p,q}$
is an isomorphism for $p>q$, as follows essentially from the
result of \cite{AyV1}. This gives a description of all
differentials and all non-diagonal terms of $\Ea{X}^1_{*,*}$,
which is stated in Theorem \ref{thmEplus1struct}. The diagonal
terms of the spectral sequence require an independent
investigation. We prove, in particular, that $\dim \E{X}^2_{q,q} =
\dim \Ea{X}^2_{q,q} = h'_{n-q}(S_Q)$
--- the $h'$-number of the dual simplicial poset
(Theorem~\ref{thmBorderManif}). The proof involves combinatorial
computations and is placed in separate Section \ref{secHvectors},
where we give all necessary definitions from the combinatorial
theory of simplicial posets. The appearance of $h'$-vector in this
problem is quite natural. If $Q$ has acyclic proper faces, the
dual simplicial poset $S_Q$ is Buchsbaum. Recall that $h'$-vector
is a combinatorial notion, which was specially devised to study
the combinatorics of Buchsbaum simplicial complexes.

In Section~\ref{secHomology} we introduce the bigraded structure
on $H_*(X;\ko)$ and compute bigraded Betti numbers (Theorem
\ref{thmHomolX}). Bigraded Poincare duality easily follows from
this computation.

Most of the arguments used for manifolds with locally standard
actions, work equally well for the space $X=(P\times T^n)/\simc$
where $P$ is the cone over Buchsbaum simplicial poset equipped
with the dual face structure. In this case there holds $\dim
\E{X}^{\infty}_{q,q}=h''_q(S)$ (Theorem~\ref{thmHtwoPrimes}). Over
rational numbers, every simplicial poset admits a characteristic
function, therefore our theorem implies $h''_q(S)\geqslant 0$ for
a Buchsbaum simplicial poset $S$. This result was proved by Novik
and Swartz in \cite{NS} by a different method.

Both cases, manifolds with acyclic proper faces and cones over
Buchsbaum posets, are unified in the notion of Buchsbaum
pseudo-cell complex, introduced in Section \ref{SecPreliminaries}.
The technique developed in the paper can be applied to any
Buchsbaum pseudo-cell complex.

In the last section we analyze a simple example which shows that
without the assumption of proper face acyclicity the problem of
computing Betti numbers of $X$ is more complicated. In general,
Betti numbers of $X$ may depend not only on the orbit space $Q$,
but also on the characteristic function.

%
%
%
%
%
%
%

\section{Preliminaries}\label{SecPreliminaries}

\subsection{Coskeleton filtrations and manifolds with corners.}

\begin{defin}\label{definSimpPoset}
A finite partially ordered set (poset in the following) is called
simplicial if there is a minimal element $\minel\in S$ and, for
any $I\in S$, the lower order ideal $\{J\in S\mid J\leqslant I\}$
is isomorphic to the poset of faces of a $(k-1)$-simplex, for some
$k\geqslant 0$.
\end{defin}

The elements of $S$ are called simplices. The number $k$ in the
definition is denoted $|I|$ and called the rank of $I$. Also set
$\dim I = |I|-1$. A simplex of rank $1$ is called a vertex; the
set of all vertices is denoted $\ver(S)$. The \emph{link} of a
simplex $I\in S$ is the set $\lk_SI=\{J\in S\mid J\geqslant I\}$.
This set inherits the order relation from $S$, and $\lk_SI$ is a
simplicial poset with respect to this order, with $I$ being the
minimal element. Let $S'$ denote the barycentric subdivision of
$S$. By definition, $S'$ is a simplicial complex on the vertex set
$S\setminus\{\minel\}$ whose simplices are the ordered chains in
$S\setminus\{\minel\}$. The geometric realization of $S$ is the
geometric realization of its barycentric subdivision
$|S|\eqd|S'|$. One can also think of $|S|$ as a CW-complex with
simplicial cells (such complexes were called \emph{simplicial cell
complexes} in \cite{BPposets}). A poset $S$ is called \emph{pure}
if all its maximal elements have equal dimensions. A poset $S$ is
pure whenever $S'$ is pure.

Let $\ko$ denote a ground ring, which may be either $\Zo$ or a
field. The term ``(co)homology of simplicial poset'' means the
(co)homology of its geometrical realization. If the coefficient
ring in the notation of (co)homology is omitted, it is supposed to
be $\ko$. The rank of a $\ko$-module $A$ is denoted $\dim A$.

\begin{defin}\label{definBuchCMposets}
A simplicial poset $S$ of dimension $n-1$ is called Buchsbaum
(over $\ko$) if $\Hr_i(\lk_SI;\ko)=0$ for all $\minel\neq I\in S$
and $i\neq n-1-|I|$. If $S$ is Buchsbaum and, moreover,
$\Hr_i(S;\ko)=0$ for $i\neq n-1$, then $S$ is called
Cohen--Macaulay (over~$\ko$).
\end{defin}

By abuse of terminology we call $S$ a \emph{homology manifold} of
dimension $n-1$ if its geometric realization $|S|$ is a homology
$(n-1)$-manifold. Simplicial poset $S$ is Buchsbaum if and only if
it is Buchsbaum and, moreover, its local homology stack of highest
degree is isomorphic to a constant sheaf (see details in
\cite{AyV1}).

If $S$ is Buchsbaum and connected, then $S$ is pure. In the
following we consider only pure posets, and assume $\dim S=n-1$.

\begin{con}
For any pure simplicial poset $S$, there is an associated space
$P(S) = \cone|S|$ endowed with the dual face structure (also
called coskeleton structure), defined as follows. The complex
$P(S)$ is a simplicial complex on the set $S$ and $k$-simplices of
$S'$ have the form $(I_0< I_1<\ldots<I_k)$, where $I_j\in S$. For
each $I\in S$ consider the subsets:
\[
G_I=|\{(I_0< I_1<\ldots)\in S'\mbox{ such that } I_0\geqslant
I\}|\subset P(S),
\]
\[
\dd G_I=|\{(I_0< I_1<\ldots)\in S'\mbox{ such that } I_0>I\}|
\subset P(S).
\]
and the subset $G_I^{\circ}=G_I\setminus\dd G_I$. We have
$G_{\minel} = P(S)$; $G_I\subset G_J$ whenever $J<I$, and $\dim
G_I=n-1-\dim I$ since $S$ is pure. A subset $G_I$ is called a
\emph{dual face} of a simplex $I\in S$. A subset $\dd G_I$ is a
union of faces of smaller dimensions.
\end{con}

Recall several facts about manifolds with corners. A smooth
connected manifold with corners $Q$ is called \emph{nice} (or a
\emph{manifold with faces}) if every codimension $k$ face lies in
exactly $k$ distinct facets. In the following we consider only
nice compact orientable manifolds with corners. Any such $Q$
determines a simplicial poset $S_Q$ whose elements are the faces
of $Q$ ordered by reversed inclusion. The whole $Q$ is the maximal
face of itself, thus represents the minimal element of $S_Q$.

\begin{defin}
A nice manifold with corners $Q$ is called Buchsbaum if $Q$ is
orientable and every proper face of $Q$ is acyclic. If, moreover,
$Q$ is acyclic itself, it is called Cohen--Macaulay.
\end{defin}

If $Q$ is a Buchsbaum manifold with corners, then its underlying
simplicial poset $S_Q$ is Buchsbaum (moreover, $S_Q$ is a homology
manifold), and when $Q$ is Cohen--Macaulay, then so is $S_Q$
(moreover, $S_Q$ is a homology sphere) by \cite[Lm.6.2]{AyV1}.

\subsection{Buchsbaum pseudo-cell complexes}

It is convenient to introduce a notion which captures both
manifolds with corners and cones over simplicial posets.

\begin{con}[Pseudo-cell complex]
A CW-pair $(F,\dd F)$ will be called $k$-dimensional pseudo-cell,
if $F$ is compact and connected, $\dim F = k$, $\dim \dd
F\leqslant k-1$. A (regular finite) \emph{pseudo-cell complex} $Q$
is a space which is a union of an expanding sequence of subspaces
$Q_k$ such that $Q_{-1}$ is empty and $Q_{k}$ is the pushout
obtained from $Q_{k-1}$ by attaching finite number of
$k$-dimensional pseudo-cells $(F,\dd F)$ along injective attaching
CW-maps $\dd F\to Q_{k-1}$. We also assume that the boundary of
each pseudo-cell is a union of lower dimensional pseudo-cells. The
poset of pseudo-cells, ordered by the reversed inclusion is
denoted by $S_Q$. The abstract elements of $S_Q$ are denoted by
$I,J$, etc. and the corresponding pseudo-cells considered as
subsets of $Q$ are denoted $F_I,F_J$, etc.

A pseudo-cell complex $Q$, of dimension $n$ is called
\emph{simple} if $S_Q$ is a simplicial poset of dimension $n-1$
and $\dim F_I=n-1-\dim I$ for all $I\in S_Q$. In particular, the
space $Q$ itself represents the maximal pseudo-cell,
$Q=F_{\minel}$. Pseudo-cells of a simple pseudo-cell complex $Q$
will be called \emph{faces}, faces different from $Q$ ---
\emph{proper faces}, and maximal proper faces --- \emph{facets}.
Facets correspond to vertices of $S_Q$.
\end{con}

Examples of simple pseudo-cell complexes are nice manifolds with
corners and cones over simplicial posets. Simple polytopes are
examples, which lie in both of these classes.

\begin{defin}\label{definBuchPCcomplex}
A simple pseudo-cell complex $Q$ is called Buchsbaum (over $\ko$)
if, for any proper face $F_I\subset Q$, $I\neq\minel$, the
following conditions hold:
\begin{enumerate}
\item $F_I$ is acyclic, $\Hr_*(F_I;\ko)=0$;
\item $H_j(F_I,\dd F_I;\ko)=0$ for each $j\neq\dim F_I$.
\end{enumerate}
Buchsbaum complex $Q$ is called Cohen--Macaulay (over $\ko$) if
these two conditions also hold for the maximal face
$F_{\minel}=Q$.
\end{defin}

Both Buchsbaum manifolds and cones over Buchsbaum posets are
examples of Buchsbaum pseudo-cell complexes (and the same for
Cohen--Macaulay property). Indeed, in the case of Buchsbaum
manifold with corners, $\Hr_*(F_I)$ vanishes by definition and
$H_*(F_I,\dd F_I)$ vanishes in the required degrees by the
Poincare--Lefschetz duality, since every face $F_I$ is an
orientable manifold with boundary. In the cone case, we have
$G_I=\cone(\dd G_I)$ and $\dd G_I\cong |\lk_SI|$, so the
conditions of Definition~\ref{definBuchPCcomplex} follow from the
isomorphism $H_*(G_I,\dd G_I)\cong H_{*-1}(\dd G_I)$ and
Definition~\ref{definBuchCMposets}.

We have a topological filtration
\[
Q_0\subset Q_1\subset \ldots\subset Q_{n-1}\subset Q_n=Q;
\]
and a truncated filtration
\[
Q_0\subset Q_1\subset \ldots\subset Q_{n-1}=\dd Q,
\]
where $Q_j$ is a union of faces of dimension $\leqslant j$. The
homological spectral sequences associated with these filtrations
are denoted $\E{Q}^r_{p,q}$ and $\E{\dd Q}^r_{p,q}$ respectively.
The same argument as in \cite[Lm.6.2]{AyV1} proves the following

\begin{prop}\label{propUnivers}\mbox{}
\begin{enumerate}
\item Let $Q$ be a Buchsbaum pseudo-cell complex, $S_Q$ be
its underlying poset, and $P=P(S_Q)$ be the cone complex. Then
there exists a face-preserving map $\varphi\colon Q\to P$ which
induces the identity isomorphism of posets of faces and an
isomorphism of the truncated spectral sequences $\varphi_*\colon
\E{\dd Q}^r_{*,*}\stackrel{\cong}{\to}\E{\dd P}^r_{*,*}$ for
$r\geqslant 1$. In particular, if $Q$ is a Buchsbaum pseudo-cell
complex, then $S_Q$ is a Buchsbaum simplicial poset.

\item If $Q$ is a Cohen--Macaulay pseudo-cell complex of dimension $n$, then $\varphi$
induces an isomorphism of non-truncated spectral sequences
$\varphi_*\colon \E{Q}^r_{*,*}\stackrel{\cong}{\to}\E{P}^r_{*,*}$
for $r\geqslant 1$. In particular, if $Q$ is a Cohen--Macaulay
pseudo-cell complex, then $S_Q$ is a Cohen--Macaulay simplicial
poset.
\end{enumerate}
\end{prop}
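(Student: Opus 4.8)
The plan is to realize $\varphi$ by induction over the pseudo-cell filtration of $Q$, and then compare the two homological spectral sequences on their $E^1$-pages; the heart of the matter is that $\varphi$ restricts to a $\ko$-homology isomorphism on each pair $(F_I,\dd F_I)$. Note first that $P=P(S_Q)$ is itself a simple pseudo-cell complex with face poset $S_Q$, its faces being the dual faces $G_I=\cone(\dd G_I)$, which are contractible, and $\dd G_I\cong|\lk_{S_Q}I|$. I would construct $\varphi$ so that $\varphi(F_I)\subseteq G_I$ for every $I\in S_Q$: on $Q_0$ send each $0$-dimensional face (a point) to the corresponding point $G_I$; assuming $\varphi$ has been defined on $Q_{k-1}$ with $\varphi(F_J)\subseteq G_J$ for $\dim F_J<k$, for a $k$-face $F_I$ one has $\varphi(\dd F_I)=\bigcup_{J>I}\varphi(F_J)\subseteq\bigcup_{J>I}G_J\subseteq\dd G_I$, and since $G_I$ is contractible and $(F_I,\dd F_I)$ is a CW-pair the composite $\dd F_I\to\dd G_I\hookrightarrow G_I$ extends over $F_I$; these extensions assemble over the pushout $Q_k$. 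The resulting face-preserving map $\varphi\colon Q\to P$ is then filtration-preserving, $\varphi(Q_k)\subseteq P_k$, and induces the identity on face posets.

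Next I would prove, by induction on $\dim F_I$, that $\varphi\colon(F_I,\dd F_I)\to(G_I,\dd G_I)$ induces isomorphisms in all homology for every proper face $I$, and also for $I=\minel$ when $Q$ is Cohen--Macaulay. The case $\dim F_I=0$ is immediate. For the inductive step, filter $\dd F_I=\bigcup_{J>I}F_J$ by pseudo-cell skeleta, and $\dd G_I$ likewise; since every $J>I$ satisfies $\dim F_J<\dim F_I$, excision identifies the corresponding $E^1$-pages with $\bigoplus_{J>I,\,\dim F_J=p}H_*(F_J,\dd F_J)$ and $\bigoplus_{J>I,\,\dim G_J=p}H_*(G_J,\dd G_J)$, on which $\varphi$ is an isomorphism by the inductive hypothesis, so $\varphi_*\colon H_*(\dd F_I)\to H_*(\dd G_I)$ is an isomorphism. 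Since $F_I$ is acyclic --- condition (1) of Definition~\ref{definBuchPCcomplex} when $I$ is proper, or the Cohen--Macaulay hypothesis when $I=\minel$ --- and $G_I$ is contractible, comparing the long exact sequences of the two pairs via the five lemma, with the usual small care in degrees $0$ and $1$ (where one uses connectedness of $F_I,G_I$ and $\dd F_I\neq\varnothing$), yields the isomorphism $\varphi_*\colon H_*(F_I,\dd F_I)\to H_*(G_I,\dd G_I)$.

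Granting this, part (1) follows: $\E{\dd Q}^1_{p,q}=\bigoplus_{\dim F_I=p,\,I\neq\minel}H_{p+q}(F_I,\dd F_I)$ and similarly for $\dd P$, so $\varphi_*$ is an isomorphism on the truncated $E^1$-pages, hence --- being compatible with all differentials --- an isomorphism of $E^r$-pages for $r\geqslant1$. That $S_Q$ is Buchsbaum then drops out: combining conditions (1) and (2) of Definition~\ref{definBuchPCcomplex} with the long exact sequence of $(F_I,\dd F_I)$ gives $\Hr_i(\dd F_I)=0$ for $i\neq\dim F_I-1=n-1-|I|$, and transporting along $\varphi_*$ together with $\dd G_I\cong|\lk_{S_Q}I|$ yields $\Hr_i(\lk_{S_Q}I)=0$ for $i\neq n-1-|I|$, which is Definition~\ref{definBuchCMposets}. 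For part (2), in the Cohen--Macaulay case the inductive claim also covers $I=\minel$, so $\E{Q}^1_{n,q}=H_{n+q}(Q,\dd Q)\to H_{n+q}(P,\dd P)=\E{P}^1_{n,q}$ is an isomorphism (the rows $p<n$ being handled by part (1)), whence $\varphi_*$ is an isomorphism of non-truncated $E^r$-pages; and since $Q$ is acyclic, $\Hr_*(|S_Q|)\cong\Hr_*(\dd P)\cong\Hr_*(\dd Q)$ is concentrated in degree $n-1$, so $S_Q$ is Cohen--Macaulay.

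The one step that is not pure bookkeeping is the inductive homology comparison on the pairs $(F_I,\dd F_I)$: one must set up the skeletal filtrations of $\dd F_I$ and $\dd G_I$ so that $\varphi$ respects them and the excision description of the $E^1$-pages is valid, which is exactly what lets the induction on $\dim F_I$ close; everything afterward is standard five-lemma and comparison-of-spectral-sequences machinery.
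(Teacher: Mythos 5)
Your proposal is correct and is essentially the argument the paper intends: the paper itself gives no details, only the remark that ``the same argument as in [AyV1, Lm.~6.2] proves the following,'' and that argument is exactly your cell-by-cell construction of $\varphi$ into the contractible dual faces $G_I=\cone(\dd G_I)$ followed by the inductive five-lemma comparison of $H_*(F_I,\dd F_I)$ with $H_*(G_I,\dd G_I)$ and the resulting $E^1$-page isomorphism. The only point worth keeping explicit in a write-up is the one you already flag: that $\varphi(F_J)\subseteq G_J$ for all $J$ forces the induced map on $E^1_{p,*}=\bigoplus_{\dim F_I=p}H_*(F_I,\dd F_I)$ to be block-diagonal, so the face-by-face isomorphisms really do assemble to an isomorphism of spectral sequences.
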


Thus all homological information about Buchsbaum pseudo-cell
complex $Q$ away from its maximal cell is encoded in the
underlying poset $S_Q$. This makes Buchsbaum pseudo-cell
complexes, and in particular Buchsbaum manifolds with corners, a
good family to study.

%
%
%
%
%
%
%

\section{Spectral sequence of $Q$}\label{secTopolQ}

\subsection{Truncated and non-truncated spectral sequences}

In Buchsbaum case the spectral sequence $\E{Q}^r_{p,q}$ can be
described explicitly. We have $\E{Q}^r_{p,q}\Rightarrow
H_{p+q}(Q)$, the differentials act as $\dif{Q}^r\colon
\E{Q}^r_{p,q}\to \E{Q}^r_{p-r,q+r-1}$, and
\[
\E{Q}^1_{p,q}\cong H_{p+q}(Q_p,Q_{p-1})\cong \bigoplus_{I, \dim
F_I=p}H_{p+q}(F_I,\dd F_I).
\]
By the definition of Buchsbaum pseudo-cell complex, we have
$\E{Q}^1_{p,q}=0$ unless $q=0$ or $p=n$. Such form of the spectral
sequence will be referred to as \ang-shaped.

By forgetting the last term of the filtration we get the spectral
sequence $\E{\dd Q}^r_{p,q}\Rightarrow H_{p+q}(\dd Q)$, whose
terms vanish unless $q=0$. Thus $\E{\dd Q}^r_{p,q}$ collapses at a
second page, giving the isomorphism $\E{\dd Q}^2_{p,0}\cong
H_p(\dd Q)$.

In the non-truncated case we have $\E{Q}^2_{p,0}\cong \E{\dd
Q}^2_{p,0}$ for $p\neq n, n-1$. The terms $\E{Q}^2_{n,q}$ coincide
with $\E{Q}^1_{n,q}\cong H_{n+q}(Q,\dd Q)$ when $q\neq 0$. The
term $\E{Q}^2_{n-1,0}$ differs from $\E{\dd Q}^2_{n-1,0}\cong
H_{n-1}(\dd Q)$ by the image of the first differential $\dif{Q}^1$
which hit it at the previous step. Similarly, the term
$\E{Q}^2_{n,0}$ is the kernel of the same differential. To avoid
mentioning these two exceptional cases every time in the
following, we introduce the formalism of modified spectral
sequence.

\subsection{Modified spectral sequence}\label{subsecArtPage}

Let $\Ea{Q}^1_{*,*}$ be the collection of $\ko$-modules defined by
\[
\Ea{Q}^1_{p,q}\eqd\begin{cases} \E{\dd Q}^2_{p,q}, \mbox{ if } p\leqslant n-1,\\
\E{Q}^1_{p,q},\mbox{ if } p=n,\\
0,\mbox{ otherwise}.
\end{cases}
\]
Let $\difm{Q}$ be the differential of degree $(-1,0)$ acting on
$\bigoplus\E{Q}^1_{p,q}$ by:
\[
\difm{Q}=\begin{cases}\dif{Q}^1\colon
\E{Q}^1_{p,q}\to\E{Q}^1_{p-1,q}, \mbox{ if } p\leqslant n-1,\\0,
\mbox{ otherwise}
\end{cases}
\]
It is easily seen that the homology module $H(\E{Q}^1;\difm{Q})$
is isomorphic to $\Ea{Q}^1$. Now consider the differential
$\difa{Q}^1$ of degree $(-1,0)$ acting on
$\bigoplus\Ea{Q}^1_{p,q}$:
\[
\difa{Q}^1=\begin{cases} 0, \mbox{ if }p\leqslant n-1;\\
\E{Q}^1_{n,q}\stackrel{\dif{Q}^1}{\longrightarrow}\E{Q}^1_{n-1,q},\mbox{
if }p=n.
\end{cases}
\]
In the latter case, the image of the differential lies in
$\Ea{Q}^1_{n-1,q}\subseteq \E{Q}^{1}_{n-1,q}$ since
$\Ea{Q}^1_{n-1,q}$ is just the kernel of $\dif{Q}^1$. We have
$\E{Q}^2\cong H(\Ea{Q}^1,\difa{Q}^1)$. These considerations are
shown on the diagram:
\[
\xymatrix{
&\Ea{Q}^1\ar@{..>}[rd]^{\difa{Q}^1}&&&\\
\E{Q}^1\ar@{..>}[rr]^{\dif{Q}^1}\ar@{..>}[ru]^{\difm{Q}}&&
\E{Q}^2\ar@{..>}[r]^{\dif{Q}^2}&\E{Q}^3\ar@{..>}[r]^{\dif{Q}^3}&\ldots
}
\]
in which the dotted arrows represent passing to homology. To
summarize:

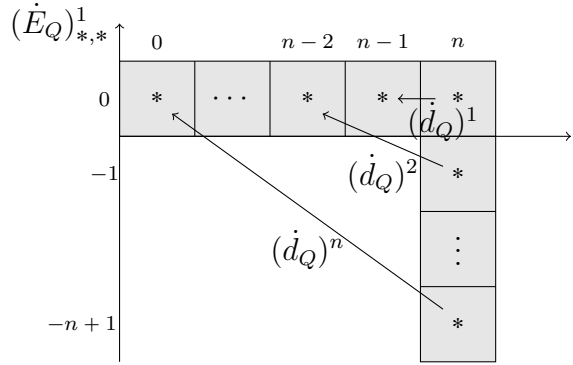
\begin{figure}[h]
\begin{center}
\begin{tikzpicture}[scale=1]
        \filldraw[fill=black!10] (0,0)--(0,1)--(5,1)--(5,-3)--(4,-3)--(4,0)--cycle;
        \draw[->]  (0,0)--(6,0);
        \draw[->]  (0,-3)--(0,1.5);
        \draw (1,0)--(1,1); \draw (2,0)--(2,1); \draw
        (3,0)--(3,1); \draw (4,0)--(4,1); \draw (4,0)--(5,0); \draw
        (4,-1)--(5,-1); \draw (4,-2)--(5,-2);

        \draw (0.5,0.5) node{$\ast$}; \draw (1.5,0.5)
        node{$\cdots$}; \draw (2.5,0.5) node{$\ast$}; \draw (3.5,0.5)
        node{$\ast$}; \draw (4.5,0.5) node{$\ast$}; \draw (4.5,-0.5)
        node{$\ast$}; \draw (4.5,-1.4) node{$\vdots$}; \draw (4.5,-2.5) node{$\ast$};

        \draw[->] (4.2,0.5)--(3.7,0.5); \draw[->]
        (4.3,-0.4)--(2.7,0.3); \draw[->] (4.3,-2.3)--(0.7,0.3);

        \draw (-0.8,1.5) node{$\Ea{Q}^1_{*,*}$}; \draw (0.5,1.25) node{\tiny
        $0$}; \draw (2.5,1.25) node{\tiny $n-2$}; \draw (3.5,1.25) node{\tiny
        $n-1$}; \draw (4.5,1.25) node{\tiny $n$};

        \draw (-0.2,0.5) node{\tiny $0$}; \draw (-0.2,-0.5) node{\tiny
        $-1$}; \draw (-0.5,-2.5) node{\tiny $-n+1$};

        \draw (4.3,0.2) node{$\difa{Q}^1$}; \draw (3.5,-0.5)
        node{$\difa{Q}^2$}; \draw (2.5,-1.5) node{$\difa{Q}^n$};
\end{tikzpicture}
\end{center}
\caption{The shape of the spectral sequence.} \label{figAngShaped}
\end{figure}

\begin{claim}\label{claimFormalPage}
There is a homological spectral sequence
$\Ea{Q}^r_{p,q}\Rightarrow H_{p+q}(Q)$ such that $\Ea{Q}^1_{*,*}=
H(\E{Q}^1,\difm{Q})$, and $\Ea{Q}^r_{*,*}=\E{Q}^r_{*,*}$ for
$r\geqslant 2$. The only nontrivial differentials of this sequence
have the form
\[
\difa{Q}^r\colon \Ea{Q}^r_{n,1-r}\to \Ea{Q}^r_{n-r,0}
\]
for $r\geqslant 1$ (see Figure \ref{figAngShaped}).
\end{claim}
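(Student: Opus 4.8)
The plan is to obtain $\Ea{Q}^r$ by grafting a single new first page onto the \emph{existing} tail of $\E{Q}^r$: for $r\geqslant 2$ one simply sets $\Ea{Q}^r_{p,q}\eqd\E{Q}^r_{p,q}$ and $\difa{Q}^r\eqd\dif{Q}^r$, so that convergence $\Ea{Q}^r\Rightarrow H_{p+q}(Q)$ is inherited from $\E{Q}^r\Rightarrow H_{p+q}(Q)$ and only three things require an argument: (i) the identification $\Ea{Q}^1=H(\E{Q}^1,\difm{Q})$; (ii) that $\difa{Q}^1$ descends from $\dif{Q}^1$ and satisfies $H(\Ea{Q}^1,\difa{Q}^1)\cong\E{Q}^2$ canonically; and (iii) the stated location of the differentials.

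For (i): by construction $\difm{Q}$ vanishes outside the bottom row $q=0$ and outside the range $1\leqslant p\leqslant n-1$, so the complex $\bigl(\bigoplus_p\E{Q}^1_{p,0},\difm{Q}\bigr)$ is
\[
\E{Q}^1_{n,0}\xrightarrow{0}\E{Q}^1_{n-1,0}\xrightarrow{\dif{Q}^1}\E{Q}^1_{n-2,0}\xrightarrow{\dif{Q}^1}\cdots\xrightarrow{\dif{Q}^1}\E{Q}^1_{0,0},
\]
that is, the first page of the truncated sequence $\E{\dd Q}^1$ (which computes $H_*(\dd Q)$) with an extra summand $\E{Q}^1_{n,0}$ attached on the left through the zero map. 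Its homology equals $\E{Q}^1_{n,0}$ at $p=n$ (it is untouched), $\E{\dd Q}^2_{n-1,0}=\Ker\dif{Q}^1$ at $p=n-1$ (no incoming differential), and $\E{\dd Q}^2_{p,0}$ for $p\leqslant n-2$; off the column $p=n$ and the row $q=0$ both $\E{Q}^1$ and its homology vanish. This is exactly the defining formula for $\Ea{Q}^1$.

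For (ii), which is the substance of the claim: write $\dif{Q}^1=\difm{Q}+\delta$, where $\delta$ is the component of $\dif{Q}^1$ emanating from the column $p=n$, namely $\delta\colon\E{Q}^1_{n,q}\to\E{Q}^1_{n-1,q}$, and $\delta=0$ elsewhere. Since $\E{Q}^1_{p,q}=0$ for $p>n$, all four compositions $(\difm{Q})^2$, $\difm{Q}\delta$, $\delta\difm{Q}$, $\delta^2$ are restrictions of $(\dif{Q}^1)^2=0$ and thus vanish, so $\delta$ induces a differential on $H(\E{Q}^1,\difm{Q})=\Ea{Q}^1$ --- this is $\difa{Q}^1$, and its image lies in $\Ker\dif{Q}^1=\Ea{Q}^1_{n-1,q}$ because $(\dif{Q}^1)^2=0$. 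Now compare $H(\Ea{Q}^1,\difa{Q}^1)$ with $\E{Q}^2$ bidegree by bidegree: away from $p\in\{n-1,n\}$ nothing changes and one gets $\E{\dd Q}^2_{p,0}\cong\E{Q}^2_{p,0}$; at $(n-1,0)$ one gets $\Ea{Q}^1_{n-1,0}/\im\difa{Q}^1=\Ker\dif{Q}^1/\im\dif{Q}^1$, which is the cokernel description of $\E{Q}^2_{n-1,0}$ noted above; at $(n,q)$ there is no incoming $\difa{Q}^1$ and the only outgoing one is $\delta$, so the homology is $\Ker\delta$, which is the kernel description of $\E{Q}^2_{n,q}$. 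Because each identification is the obvious one on subquotients of $\E{Q}^1$, the differential $\dif{Q}^2$ acts on $H(\Ea{Q}^1,\difa{Q}^1)$ and we may legitimately continue with $\Ea{Q}^r=\E{Q}^r$, $\difa{Q}^r=\dif{Q}^r$ for $r\geqslant 2$.

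Finally (iii): being \ang-shaped passes to every subquotient, so $\Ea{Q}^r_{p,q}=\E{Q}^r_{p,q}=0$ unless $q=0$ or $p=n$, and always $=0$ when $p>n$. A differential $\difa{Q}^r$ of bidegree $(-r,r-1)$ with both ends nonzero forces its target $(p-r,q+r-1)$ to satisfy $q+r-1=0$ (the alternative $p-r=n$ is excluded by $p\leqslant n$), hence $q=1-r$; then the source $(p,1-r)$ is nonzero only for $p=n$ --- for $r\geqslant 2$ since $1-r\neq0$, and for $r=1$ directly from the definition of $\difa{Q}^1$. Thus the only possibly nonzero differentials are $\difa{Q}^r\colon\Ea{Q}^r_{n,1-r}\to\Ea{Q}^r_{n-r,0}$, as asserted. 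The step that demands care rather than calculation is (ii): one must pin down the isomorphism $H(\Ea{Q}^1,\difa{Q}^1)\cong\E{Q}^2$ on the nose, compatibly with the subquotient identifications, so that the higher differentials of $\E{Q}^r$ transport unambiguously to the $\difa{Q}^r$.
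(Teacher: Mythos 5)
Your proposal is correct and follows essentially the same route as the paper: the paper's ``proof'' of this claim is the construction in Section~\ref{subsecArtPage} immediately preceding it, which defines $\Ea{Q}^1$, $\difm{Q}$, $\difa{Q}^1$ exactly as you do and asserts the identifications $\Ea{Q}^1\cong H(\E{Q}^1,\difm{Q})$ and $\E{Q}^2\cong H(\Ea{Q}^1,\difa{Q}^1)$ together with the bidegree argument based on the \ang-shape. You merely spell out the ``easily seen'' verifications (the row-by-row homology computation, the decomposition $\dif{Q}^1=\difm{Q}+\delta$, and the location of the differentials), which is consistent with the paper.
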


The differentials have pairwise distinct domains and targets. Thus
the whole spectral sequence $\E{Q}^r_{p,q}\Rightarrow H_{p+q}(Q)$
folds into a single long exact sequence, which is isomorphic to a
long exact sequence of the pair $(Q,\dd Q)$:

\begin{equation}
\xymatrix{\ldots\ar@{->}[r]& H_{n+1-r}(Q)\ar@{=}[dd]\ar@{->}[r]&
\Ea{Q}_{n,1-r}^r\ar@{->}[r]^{\difa{Q}^r}&
\Ea{Q}_{n-r,0}^r\ar@{->}[r]&
H_{n-r}(Q)\ar@{->}[r]\ar@{=}[dd]&\ldots\\
&&\Ea{Q}^1_{n,1-r}\ar@{=}[u]\ar@{=}[d]&\Ea{Q}^1_{n-r,0}\ar@{->}[d]^{\cong}\ar@{=}[u]&&\\
\ldots\ar@{->}[r]& H_{n+1-r}(Q)\ar@{->}[r]&H_{n+1-r}(Q,\dd Q)
\ar@{->}[r]^(0.55){\delta_{n+1-r}}& H_{n-r}(\dd
Q)\ar@{->}[r]&H_{n-r}(Q)\ar@{->}[r]&\ldots}
\end{equation}

In particular, the differentials $\dif{Q}^r\colon
\E{Q}^r_{n,1-r}\to \E{Q}^r_{n-r,0}$ coincide up to isomorphism
with the connecting homomorphisms
\[
\delta_{n+1-r}\colon H_{n+1-r}(Q,\dd Q)\to H_{n-r}(\dd Q).
\]
This proves
\begin{prop}\label{propSpecSeqQForm}
Up to isomorphism, the spectral sequence
$\Ea{Q}^r_{*,*}\Rightarrow H_*(Q)$ has the form:
\begin{center}
\begin{tikzpicture}[scale=2]
        \filldraw[fill=black!10] (0,0)--(0,0.7)--(6,0.7)--(6,-2.1)--(4.5,-2.1)--(4.5,0)--cycle;
        \draw[->, thick, black]  (0,0)--(6.5,0);
        \draw[->, thick, black]  (0,-2)--(0,1.1);
        \draw (1,0)--(1,0.7); \draw (2,0)--(2,0.7); \draw
        (3,0)--(3,0.7); \draw (4.5,0)--(4.5,0.7); \draw (4.5,0)--(6,0); \draw
        (4.5,-0.7)--(6,-0.7); \draw (4.5,-1.4)--(6,-1.4);

        \draw (0.5,0.35) node{$H_0(\dd Q)$}; \draw (1.5,0.35)
        node{$\cdots$}; \draw (2.5,0.35) node{$H_{n-2}(\dd Q)$}; \draw (3.5,0.35) node{$H_{n-1}(\dd
        Q)$}; \draw (5.35,0.35) node{$H_n(Q,\dd Q)$}; \draw (5.25,-0.35) node{$H_{n-1}(Q,\dd
        Q)$}; \draw (5.25,-1) node{$\vdots$}; \draw (5.25,-1.75) node{$H_1(Q,\dd Q)$};

        \draw[->,thick] (4.7,0.35)--(4.1,0.35); \draw[->,thick]
        (4.6,-0.35)--(2.8,0.15); \draw[->,thick] (4.7,-1.75)--(0.8,0.15);

        \draw (0.4,1.1) node{$\Ea{Q}^1_{*,*}$}; \draw (0.5,0.85) node{\tiny
        $0$}; \draw (2.5,0.85) node{\tiny $n-2$}; \draw (3.75,0.85) node{\tiny
        $n-1$}; \draw (5.25,0.85) node{\tiny $n$};

        \draw (-0.15,0.35) node{\tiny $0$}; \draw (-0.15,-0.35) node{\tiny
        $-1$}; \draw (-0.27,-1.75) node{\tiny $-n+1$};

        \draw (4.4,0.55) node{$\difa{Q}^1=\delta_n$}; \draw (3.5,-0.3)
        node{$\difa{Q}^2=\delta_{n-1}$}; \draw (2.5,-1.1) node{$\difa{Q}^n=\delta_1$};
\end{tikzpicture}
\end{center}

\end{prop}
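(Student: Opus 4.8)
\emph{Proof proposal.}
The idea is that the spectral sequence $\Ea{Q}^r_{*,*}$ is already so degenerate --- by Claim~\ref{claimFormalPage} its only nonzero differentials $\difa{Q}^r\colon\Ea{Q}^r_{n,1-r}\to\Ea{Q}^r_{n-r,0}$ have pairwise distinct sources and targets --- that it collapses into a single long exact sequence, and the task is to match that sequence, up to isomorphism, with the long exact homology sequence of the pair $(Q,\dd Q)=(Q_n,Q_{n-1})$.

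First I would pin down the first page. The truncated spectral sequence $\E{\dd Q}^r_{p,q}$ vanishes for $q\neq0$, hence collapses at the second page, so $\Ea{Q}^1_{p,0}=\E{\dd Q}^2_{p,0}\cong H_p(\dd Q)$ for $p\leqslant n-1$; and in the top column $\Ea{Q}^1_{n,q}=\E{Q}^1_{n,q}\cong H_{n+q}(Q_n,Q_{n-1})=H_{n+q}(Q,\dd Q)$, all other terms being zero. This is the shape drawn in the picture. Each of the positions $(n,1-r)$ and $(n-r,0)$ is involved in exactly one differential, namely $\difa{Q}^r$, so $\Ea{Q}^r_{n,1-r}=\Ea{Q}^1_{n,1-r}$, $\Ea{Q}^r_{n-r,0}=\Ea{Q}^1_{n-r,0}$, and $\Ea{Q}^\infty_{n,1-r}=\Ker\difa{Q}^r$, $\Ea{Q}^\infty_{n-r,0}=\coker\difa{Q}^r$. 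Since in each total degree at most the two positions $(m,0)$ and $(n,m-n)$ of $\Ea{Q}^\infty$ are nonzero, the abutment gives short exact sequences $0\to\coker\difa{Q}^{\,n-m}\to H_m(Q)\to\Ker\difa{Q}^{\,n+1-m}\to0$, which splice (over all $m$) into one long exact sequence whose connecting maps are the differentials $\difa{Q}^{\,n+1-m}\colon H_m(Q,\dd Q)\to H_{m-1}(\dd Q)$.

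The key step is then the identification $\difa{Q}^r=\delta_{n+1-r}$, for every $r\geqslant1$, under the isomorphisms just described. I would verify it using the standard zig-zag description of the $r$-th differential in the skeletal spectral sequence: take $x\in\Ea{Q}^r_{n,1-r}=H_{n+1-r}(Q_n,Q_{n-1})$ and let $\dd x\in H_{n-r}(Q_{n-1})=H_{n-r}(\dd Q)$ be its image under the boundary map of the pair $(Q_n,Q_{n-1})$; since the truncated sequence collapses with its only nonzero $E^\infty$-term in total degree $n-r$ sitting in filtration degree $n-r$, one has $F_{n-r}H_{n-r}(\dd Q)=H_{n-r}(\dd Q)$, so $\dd x$ lifts to some $\bar x\in H_{n-r}(Q_{n-r})$; then $\difa{Q}^r(x)$ is the class of the image of $\bar x$ in $\E{Q}^1_{n-r,0}=H_{n-r}(Q_{n-r},Q_{n-r-1})$. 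The image of an absolute class has trivial boundary in the pair $(Q_{n-r},Q_{n-r-1})$, so this class survives to $\E{Q}^\infty_{n-r,0}$, and under the collapse isomorphism $\Ea{Q}^1_{n-r,0}=\E{\dd Q}^2_{n-r,0}\cong H_{n-r}(\dd Q)$ it corresponds exactly to the image of $\bar x$, namely $\dd x$. But $\dd\colon H_{n+1-r}(Q,\dd Q)\to H_{n-r}(\dd Q)$ is precisely the connecting homomorphism $\delta_{n+1-r}$ of $(Q,\dd Q)=(Q_n,Q_{n-1})$. Substituting this into the long exact sequence of the previous paragraph turns it into the long exact homology sequence of $(Q,\dd Q)$, which is the assertion of the proposition and is exactly the commutative ladder displayed above.

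I expect the one genuinely technical point to be this last identification: checking that the zig-zag computation of $\difa{Q}^r$ matches the collapse isomorphism $\E{\dd Q}^2_{n-r,0}\cong H_{n-r}(\dd Q)$ on the nose, so that the ladder commutes and no sign or indeterminacy obstructs the comparison. Everything else --- the shape of $\Ea{Q}^1_{*,*}$, the collapse of $\E{\dd Q}^r$, and the folding of a spectral sequence with disjoint differentials into a long exact sequence --- is formal.
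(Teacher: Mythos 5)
Your proposal is correct and follows essentially the same route as the paper: identify the first page ($H_p(\dd Q)$ on the bottom row, $H_{n+q}(Q,\dd Q)$ in the last column), fold the \ang-shaped sequence with pairwise disjoint differentials into a single long exact sequence, and match it with the long exact sequence of the pair $(Q,\dd Q)$. The only difference is one of detail: the paper asserts the identification $\difa{Q}^r=\delta_{n+1-r}$ via a commutative ladder without further argument, whereas you carry out the zig-zag computation of the differential explicitly --- a worthwhile addition, and your treatment of it (lifting $\dd x$ to $H_{n-r}(Q_{n-r})$ using the collapse of the truncated sequence) is sound.
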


%
%
%
%
%
%
%

\section{Quotient construction and its spectral sequence}\label{SecTorusActions}

\subsection{Quotient construction}

Let $T^n$ denote a compact torus, and $\Lambda_*$ be its homology
algebra, $\Lambda_*=\bigoplus_{j=0}^n\Lambda_j$,
$\Lambda_j=H_j(T^n;\ko)$. Let $Q$ be a simple pseudo-cell complex
of dimension $n$, and $S_Q$ its dual simplicial poset. The map
\[
\lambda\colon \ver(S_Q)\to \{\mbox{1-dimensional toric subgroups
of } T^n\}
\]
is called \emph{characteristic function} if the following so
called $(\ast)$-condition holds: whenever $i_1,\ldots,i_k$ are the
vertices of some simplex in $S_Q$, the map
\begin{equation}\label{eqInclusionSubtori}
\lambda(i_1)\times\ldots\times\lambda(i_k)\to T^n,
\end{equation}
induced by inclusions $\lambda(i_j)\hookrightarrow T^n$, is
injective and splits. Note that $i_1,\ldots,i_k$ are the vertices
of some simplex, if and only if $F_{i_1}\cap\ldots\cap
F_{i_k}\neq\emptyset$. Denote the image of the map
\eqref{eqInclusionSubtori} by $T_I$, where $I$ is a simplex with
vertices $i_1,\ldots,i_k$.

It follows from the $(\ast)$-condition that the map
\begin{equation}\label{eqHomolSplits}
H_1(\lambda(F_1)\times\ldots\times\lambda(F_k);\ko)\to
H_1(T^n;\ko)
\end{equation}
is injective and splits for every $\ko$. If the map
\eqref{eqHomolSplits} splits for a specific ground ring $\ko$, we
say that $\lambda$ satisfies $\sta{\ko}$-condition and call it a
$\ko$-characteristic function. It is easy to see that the
topological $(*)$-condition is equivalent to $\sta{\Zo}$, and that
$\sta{\Zo}$ implies $\sta{\ko}$ for any $\ko$.

For a simple pseudo-cell complex $Q$ of dimension $n$, consider
the space $Y=Q\times T^n$.

\begin{con}
For any $\ko$-characteristic function $\lambda$ over $Q$ consider
the quotient construction
\[
X=Y/\simc=(Q\times T^n)/\simc,
\]
where $(q_1,t_1)\sim (q_2,t_2)$ if and only if $q_1=q_2\in
F_I^{\circ}$ for some $I\in S_Q$ and $t_1t_2^{-1}\in T_I$. The
action of $T^n$ on the second coordinate of $Y$ descends to the
action on $X$. The orbit space of this action is $Q$. The
stabilizer of the point $q\in F_I^{\circ}\subset Q$ is $T_I$. Let
$f$ denote the canonical quotient map from $Y$ to $X$.
\end{con}

The filtration on $Q$ induces filtrations on $Y$ and $X$:
\[
Y_i=Q_i\times T^n,\qquad X_i=Y_i/\simc,\quad i=0,\ldots,n.
\]
The filtration $X_0\subset X_1\subset\ldots\subset X_n=X$
coincides with the orbit type filtration. This means that $X_i$ is
a union of all torus orbits of dimension at most $i$. We have
$\dim X_i=2i$. We will use the following notation
\[
Y_I=F_I\times T^n,\qquad \dd Y_I=(\dd F_I)\times T^n
\]
\[
X_I=Y_I/\simc,\qquad \dd X_I = \dd Y_I/\simc
\]
for $I\in S_Q$. Note that $\dd X_I$ does not have the meaning of
topological boundary of $X_I$, this is just a conventional
notation. Since $\dd Q = Q_{n-1}$, we have $\dd Y = Y_{n-1} = (\dd
Q)\times T^n$ and $\dd X = X_{n-1} = \dd Y/\simc$.

There are homological spectral sequences
\[
\E{Y}^r_{p,q}\Rightarrow H_{p+q}(Y)\qquad \E{X}^r_{p,q}\Rightarrow
H_{p+q}(X)
\]
\[
\E{\dd Y}^r_{p,q}\Rightarrow H_{p+q}(\dd Y)\qquad \E{\dd
X}^r_{p,q}\Rightarrow H_{p+q}(\dd X),
\]
associated with these filtrations. The canonical map $f\colon Y\to
X$ induces the morphisms $f_*^r\colon \E{Y}^r_{*,*}\to
\E{X}^r_{*,*}$ and $f_*^r\colon \E{\dd Y}_{*,*}^r\to \E{\dd
X}_{*,*}^r$.

Since homology groups of the torus are torsion free, we have
\begin{equation*}
\E{Y}^r_{p,q}\cong \bigoplus_{q_1+q_2=q}\E{Q}^r_{p,q_1}\otimes
\Lambda_{q_2},
\end{equation*}
for $r\geqslant 1$ by Kunneth's formula. Similar for the truncated
spectral sequence:
\begin{equation*}
\E{\dd Y}^r_{p,q}\cong \bigoplus_{q_1+q_2=q}\E{\dd
Q}^r_{p,q_1}\otimes \Lambda_{q_2},
\end{equation*}

\subsection{Modified spectral sequences}

As in the case of $Q$ and absolutely similar to that case, we
introduce the modified spectral sequences $\Ea{Y}_{*,*}$ and
$\Ea{X}_{*,*}$. Consider the bigraded module:
\[
\Ea{Y}^1_{p,q}=\begin{cases} \E{\dd Y}_{p,q}^2, \mbox{ if }p<n;\\
\E{Y}^1_{n,q}, \mbox{ if }p=n.
\end{cases}
\]
and define the differentials $\difm{Y}\colon\E{Y}^1_{p,q}\to
\E{Y}^1_{p-1,q}$ and $\difa{Y}^1\colon\Ea{Y}^1_{p,q}\to
\Ea{Y}^1_{p-1,q}$ by
\[
\difm{Y}=\begin{cases} \dif{Y}^1, \mbox{ if }p<n;\\
0,\mbox{ if }p=n.
\end{cases}\qquad
\difa{Y}^1=\begin{cases} 0, \mbox{ if }p<n;\\
\E{Y}^1_{n,q}\stackrel{\dif{Y}^1}{\longrightarrow}\E{Y}^1_{n-1,q}
\mbox{ if }p=n.
\end{cases}
\]
It is easily checked that $\Ea{Y}^1\cong H(\E{Y}^1,\difm{Y})$ and
$\E{Y}^2\cong H(\Ea{Y}^1,\difa{Y}^1)$. Let $\Ea{Y}^r=\E{Y}^r$ for
$r\geqslant 2$. Thus we have the modified spectral sequence
$\Ea{Y}^r_{*,*}\Rightarrow H_*(Y)$. For $r\geqslant 1$ we have
\begin{equation}\label{eqKunnethYSpecSec}
\Ea{Y}^r_{p,q}\cong \bigoplus_{q_1+q_2=q}\Ea{Q}^r_{p,q_1}\otimes
\Lambda_{q_2},
\end{equation}

The same construction applies for $X$, thus we get the spectral
sequence $\Ea{X}^r_{*,*}\Rightarrow H_*(X)$ such that
$\Ea{X}^1\cong H(\E{X}^1,\difm{X})$, and $\Ea{X}^r=\E{X}^r$ for
$r\geqslant 2$. There exists an induced map of the modified
spectral sequences:
\[
\fa^r_*\colon \Ea{Y}^r \to \Ea{X}^r.
\]

By dimensional reasons the homological spectral sequence
$\E{X}^r_{p,q}\Rightarrow H_{p+q}(X)$ (and therefore its modified
version) has an obvious vanishing property:
\[
\E{X}^1_{p,q}=H_{p+q}(X_p,X_{p-1}) = 0 \mbox{ for } q>p.
\]

\begin{figure}[h]
\begin{center}
    \begin{tikzpicture}[scale=.6]


        \filldraw[fill=black!40]
        (0,0)--(0,6)--(6,6)--(6,0)--cycle;
        \filldraw[fill=black!25]
        (0,0)--(0,1)--(1,1)--(1,2)--(2,2)--(2,3)--(3,3)--(3,4)--(4,4)--(4,5)--(5,5)--(5,6)--(6,6)--(6,-4)--(5,-4)--(5,0)--cycle;
        \filldraw[fill=black!10]
        (1,0)--(1,1)--(2,1)--(2,2)--(3,2)--(3,3)--(4,3)--(4,4)--(5,4)--(5,5)--(6,5)--(6,-4)--(5,-4)--(5,0)--cycle;

        \draw[->, thick, black]  (0,0)--(0,7);
        \draw[->, thick, black]  (0,0)--(7,0);

        \draw (1,0)--(1,6); \draw (2,0)--(2,6); \draw
        (3,0)--(3,6); \draw (4,0)--(4,6); \draw (5,0)--(5,6);

        \draw (0,1)--(6,1); \draw (0,2)--(6,2); \draw
        (0,3)--(6,3); \draw (0,4)--(6,4); \draw (0,5)--(6,5);
        \draw (5,-1)--(6,-1); \draw (5,-2)--(6,-2); \draw (5,-3)--(6,-3);

        \draw (3,7) node{$\Ea{Y}^1_{p,q}$};
        \draw (0.5,-0.5) node{$0$}; \draw (-0.5,0.5) node{$0$}; \draw (7,-0.5) node{$p$}; \draw (-0.5,7) node{$q$};


        \draw[->,thick] (7,3)--(9,3);
        \draw (8,4) node{$\fa_*^1$};


        \filldraw[fill=black!40]
        (10,0)--(10,1)--(11,1)--(11,2)--(12,2)--(12,3)--(13,3)--(13,4)--(14,4)--(14,5)--(15,5)--(15,6)--(16,6)--(16,-4)--(15,-4)--(15,0)--cycle;
        \filldraw[fill=black!10]
        (11,0)--(11,1)--(12,1)--(12,2)--(13,2)--(13,3)--(14,3)--(14,4)--(15,4)--(15,6)--(16,6)--(16,-4)--(15,-4)--(15,0)--cycle;

        \draw[->, thick, black]  (10,0)--(10,7);
        \draw[->, thick, black]  (10,0)--(17,0);

        \draw (11,0)--(11,2); \draw (12,0)--(12,3); \draw
        (13,0)--(13,4); \draw (14,0)--(14,5); \draw (15,0)--(15,6);

        \draw (10,1)--(16,1); \draw (11,2)--(16,2); \draw
        (12,3)--(16,3); \draw (13,4)--(16,4); \draw (14,5)--(16,5);
        \draw (15,-1)--(16,-1); \draw (15,-2)--(16,-2); \draw (15,-3)--(16,-3);

        \draw (13,7) node{$\Ea{X}^1_{p,q}$};
        \draw (11.5,4.5) node{$0$};
    \end{tikzpicture}
\end{center}
\caption{The induced map of spectral sequences is an isomorphism
below the diagonal and injective on the diagonal.}
\label{figSpecSeqGeneral}
\end{figure}
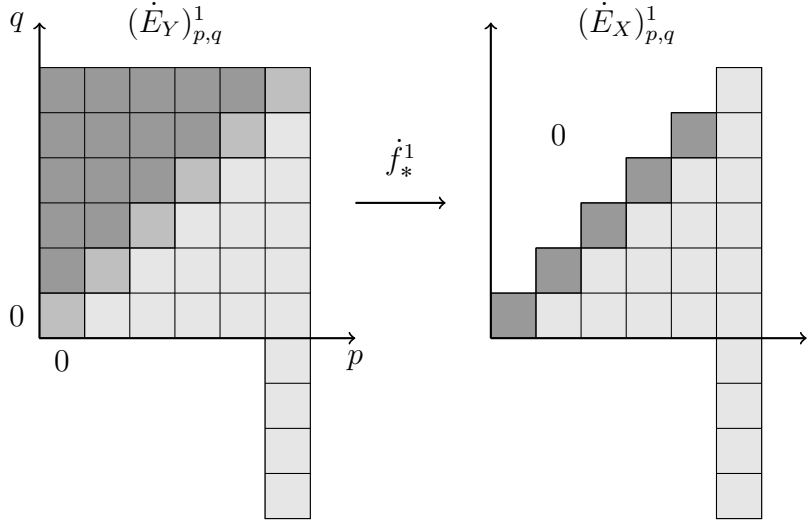

\begin{prop}
The map $\fa_*^1\colon \Ea{Y}^1_{p,q} \to \Ea{X}^1_{p,q}$ is an
isomorphism when $p>q$ or $p=q=n$. It is injective when $p=q$. In
other cases, i.e. when $p<q$, the modules $\Ea{X}^1_{p,q}$ vanish.
\end{prop}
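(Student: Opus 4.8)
The plan is to split the statement into four ranges and treat each with a different tool: $p<q$ by a dimension count, the whole column $p=n$ by excision, the region $q<p\leqslant n-1$ by the comparison theorem of \cite{AyV1}, and the diagonal $p=q\leqslant n-1$ by an explicit chain-level computation. The vanishing for $p<q$ is immediate: since $\dim X_p=2p$ and $(\dd X)_p=X_p$ for $p\leqslant n-1$, the group $\E{\dd X}^1_{p,q}=H_{p+q}(X_p,X_{p-1})$ already vanishes for $q>p$, hence $\Ea{X}^1_{p,q}=\E{\dd X}^2_{p,q}=0$; note that $q\leqslant n$ forces $p<n$ here, so we are indeed in the first branch of the definition of $\Ea{X}^1$. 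In the range $q<p\leqslant n-1$ we have $\Ea{Y}^1_{p,q}=\E{\dd Y}^2_{p,q}$ and $\Ea{X}^1_{p,q}=\E{\dd X}^2_{p,q}$, and the assertion that $\fa^1_*$ is an isomorphism here is precisely the comparison theorem of \cite{AyV1} recalled in the introduction; in passing one may identify $\Ea{Y}^1_{p,q}\cong H_p(\dd Q)\otimes\Lambda_q$, combining the collapse of $\E{\dd Q}^r$ at $r=2$ with the Künneth isomorphism for $Y=Q\times T^n$.

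Next I would dispose of the entire column $p=n$ at once. Over the open maximal face $F_{\minel}^{\circ}=Q\setminus\dd Q$ the stabilizer $T_{\minel}$ is trivial, so $f$ restricts to a homeomorphism $Y\setminus\dd Y=(Q\setminus\dd Q)\times T^n\xrightarrow{\ \cong\ }X\setminus\dd X$, and excision gives $f_*\colon H_{n+q}(Y,\dd Y)\xrightarrow{\ \cong\ }H_{n+q}(X,\dd X)$ for every $q$. Since $\Ea{Y}^1_{n,q}=\E{Y}^1_{n,q}=H_{n+q}(Y,\dd Y)$ and likewise for $X$, the map $\fa^1_*$ is an isomorphism on all of the column $p=n$; in particular it is an isomorphism at $(n,n)$ (the case $p=q=n$), and this also reproves the case $p=n>q$.

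The remaining, and genuinely substantial, case is the diagonal $p=q\leqslant n-1$. Fixing $p$, I would pass to the chain complexes $C^Y_{\bullet}=\E{\dd Y}^1_{\bullet,p}$ and $C^X_{\bullet}=\E{\dd X}^1_{\bullet,p}$, indexed by the filtration degree and carrying the first differential, so that $\Ea{Y}^1_{p,p}=H_p(C^Y_{\bullet})$, $\Ea{X}^1_{p,p}=H_p(C^X_{\bullet})$, and $\fa^1_*$ at $(p,p)$ is $H_p$ of the induced chain map. Since $H_*(F_I,\dd F_I)$ is concentrated in degree $\dim F_I$ for every proper face (part of the definition of a Buchsbaum pseudo-cell complex), the Künneth decompositions of $H_*(Y_I,\dd Y_I)$ and $H_*(X_I,\dd X_I)$ collapse to $C^Y_{p'}=\bigoplus_{\dim F_I=p'}H_{p'}(F_I,\dd F_I)\otimes\Lambda_p$ and $C^X_{p'}=\bigoplus_{\dim F_I=p'}H_{p'}(F_I,\dd F_I)\otimes H_p(T^n/T_I)$, with $f^1_*=\bigoplus_I\id\otimes(\pi_I)_*$, $\pi_I\colon T^n\to T^n/T_I$. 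The $\sta{\ko}$-condition makes $T_I$ a direct summand of $T^n$, so $(\pi_I)_*\colon H_p(T^n)\to H_p(T^n/T_I)$ is a split surjection with kernel $L_I\wedge H_{p-1}(T^n)$, where $L_I=\im\bigl(H_1(T_I;\ko)\to H_1(T^n;\ko)\bigr)$. Hence $f^1_*\colon C^Y_{\bullet}\to C^X_{\bullet}$ is a degreewise surjection of complexes; write $K_{\bullet}$ for its kernel, so $K_{p'}=\bigoplus_{\dim F_I=p'}H_{p'}(F_I,\dd F_I)\otimes\bigl(L_I\wedge H_{p-1}(T^n)\bigr)$, which is all of $C^Y_{p'}$ when $p'<p$.

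The long exact homology sequence of $0\to K_{\bullet}\to C^Y_{\bullet}\to C^X_{\bullet}\to 0$ then reduces injectivity of $\fa^1_*$ at $(p,p)$ to a single statement: every cycle of $C^Y_{\bullet}$ that lies in $K_p$ is a boundary in $C^Y_{\bullet}$. I would establish this by a filtration/Mayer--Vietoris argument over the poset $S_Q$. One writes $L_I=\sum_{i\in\ver(I)}\langle\lambda(i)\rangle$, where $\langle\lambda(i)\rangle\subseteq H_1(T^n;\ko)$ is the line spanned by the $1$-dimensional subtorus $\lambda(i)$, and correspondingly $L_I\wedge H_{p-1}(T^n)=\sum_{i\in\ver(I)}\langle\lambda(i)\rangle\wedge H_{p-1}(T^n)$; the $\sta{\ko}$-condition guarantees that these lines are linearly independent as $i$ ranges over the vertices of any single simplex, and one resolves a cycle in $K_p$ against the boundaries of the corresponding faces of $Q$, which are acyclic by the Buchsbaum hypothesis. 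I expect this final step — separating $\ker d^1$ from the subspaces $L_I\wedge H_{p-1}(T^n)$ inside $C^Y_p$ — to be the main obstacle; morally it is the combinatorial input that the later sections repackage as the $h'$-numbers of $S_Q$.
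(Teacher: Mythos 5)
Your treatment of three of the four ranges is correct and coincides with the paper's own proof: the vanishing for $p<q$ is the dimensional observation $\dim X_p=2p$; the column $p=n$ is handled by excision, exactly as in the paper; and for $q<p\leqslant n-1$ the paper likewise simply invokes \cite{AyV1} (Theorem 3 and Remark 6.6). The one place where you depart from the paper is the diagonal $p=q\leqslant n-1$, and that is where your argument has a genuine gap. The paper disposes of this case by the \emph{same} citation --- the injectivity of $f_*^2\colon \E{\dd Y}^2_{q,q}\to\E{\dd X}^2_{q,q}$ is part of \cite[Th.3, Rem.~6.6]{AyV1} --- whereas you set out to prove it from scratch and stop exactly where the real content begins.

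Your reduction itself is sound: the identification $C^X_{p'}=\bigoplus_{\dim F_I=p'}H_{p'}(F_I,\dd F_I)\otimes H_p(T^n/T_I)$, the degreewise surjectivity of $f^1_*$ with kernel $K_{p'}=\bigoplus_I H_{p'}(F_I,\dd F_I)\otimes\bigl(L_I\wedge\Lambda_{p-1}\bigr)$, and the equivalence of injectivity at $(p,p)$ with ``every cycle of $C^Y_{\bullet}$ lying in $K_p$ bounds'' are all correct. But the concluding step --- ``one resolves a cycle in $K_p$ against the boundaries of the corresponding faces'' --- is not an argument, and you say yourself that you expect it to be the main obstacle. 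The difficulty is genuinely global: a cycle in $K_p$ is spread over many faces $I$ with $\dim F_I=p$; the subspaces $L_I\wedge\Lambda_{p-1}$ vary with $I$ and are constrained by $\sta{\ko}$ only one simplex at a time; and the differential $d^1$ couples them through faces of dimension $p-1$, where $K_{p-1}=C^Y_{p-1}$ is everything, so no facewise or purely local analysis suffices. (Note also that for $p=n-1$ the truncated complex has no term in degree $n$, so the claim there is the stronger statement $\ker d^1\cap K_{n-1}=0$.) This local-to-global comparison is precisely what \cite{AyV1} carries out; as written, your proposal establishes the proposition except for the diagonal injectivity, which remains unproved. The economical fix is to cite \cite[Th.3, Rem.~6.6]{AyV1} for the diagonal as well, noting (as the paper does) that the argument there extends from manifolds with corners to arbitrary Buchsbaum pseudo-cell complexes.
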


\begin{proof}
The map $f_*^2\colon \E{\dd Y}^2_{p,q}\to \E{\dd X}^2_{p,q}$ is an
isomorphism for $p>q$ and injective for $p=q$ (see \cite[Th.3 and
Remark 6.6]{AyV1}). Thus $\fa_*^1\colon \Ea{Y}^1_{p,q}\to
\Ea{X}^1_{p,q}$ is an isomorphism for $q<p<n$ and injective for
$q=p<n$. Note that in \cite{AyV1} we considered manifolds with
corners, but the argument used there can be applied to any
Buchsbaum pseudo-cell complex without significant changes.

As for the case $p=n$, the map $f_*\colon \E{Y}^1_{n,q}\to
\E{X}^1_{n,q}$ is an isomorphism since the identification $\sim$
does not touch the interior of $Y$ and, therefore,
\[
X_n/X_{n-1} = X/\dd X \cong Y/\dd Y = Y_n/Y_{n-1}.
\]
Thus $\fa_*^1\colon \Ea{Y}^1_{n,q}=H_{n+q}(Y,\dd Y)\to
\Ea{X}^1_{n,q} =H_{n+q}(X,\dd X)$ is an isomorphism by excision.
\end{proof}

This proposition together with \eqref{eqKunnethYSpecSec} and
Proposition \ref{propSpecSeqQForm} gives a complete description of
differentials and non-diagonal terms of $\Ea{X}^r_{*,*}$.

\begin{thm}\label{thmEplus1struct}
Let $Q$ be a Buchsbaum (over $\ko$) pseudo-cell complex, and let
$X=(Q\times T^n)/\simc$ be the quotient construction determined by
some $\ko$-characteristic function on $Q$. There exists a
homological spectral sequence $\Ea{X}^r_{*,*}$
converging to $H_*(X)$. From its second page this spectral
sequence coincides with $\E{X}^*_{*,*}$, the spectral sequence
associated with the orbit type filtration. The first page,
$\Ea{X}^1$ is the homology module of $\E{X}^1$ with respect to the
differential $\difm{X}$ of degree $(-1,0)$. The following
properties hold for $\Ea{X}^*_{*,*}$:

\begin{enumerate}
\item Non-diagonal terms of the first page have the form
\[
\Ea{X}^1_{p,q}\cong\begin{cases} H_p(\dd Q)\otimes\Lambda_q,\mbox{
if }q<p<n;\\
\bigoplus\limits_{q_1+q_2=q+n}H_{q_1}(Q,\dd
Q)\otimes\Lambda_{q_2},\mbox{ if }p=n;\\
0, \mbox{ if }q>p;
\end{cases}
\]

\item
There exist injective maps $\fa_*^1\colon H_q(\dd
Q)\otimes\Lambda_{q}\hookrightarrow \Ea{X}^1_{q,q}$ for
$q\leqslant n$.

\item Nontrivial differentials for $r\geqslant 1$ have the form
\[
\difa{X}^r\cong\begin{cases}
\delta_{q_1}\otimes\id_{\Lambda}\colon
\overset{\substack{\Ea{X}^1_{n,q_1+q_2-n}\\
\cup}}{H_{q_1}(Q,\dd Q)\otimes\Lambda_{q_2}}\to \overset{\substack{\Ea{X}^1_{q_1-1,q_2}\\
\cup}}{H_{q_1-1}(\dd Q)\otimes\Lambda_{q_2}}, \\
\hfill \mbox{
if } r=n-q_1+1, q_1-1>q_2;\\
\fa_*^1\circ(\delta_{q_1}\otimes \id_{\Lambda})\colon
H_{q_1}(Q,\dd Q)\otimes\Lambda_{q_2}\to H_{q_1-1}(\dd
Q)\otimes\Lambda_{q_2}\hookrightarrow
\Ea{X}^*_{q_1-1,q_1-1},\qquad
\\ \hfill \mbox{ if } r=n-q_1+1, q_1-1=q_2;\\
0, \mbox{ otherwise}.
\end{cases}
\]
\end{enumerate}
\end{thm}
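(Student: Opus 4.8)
The strategy is to assemble the theorem from three ingredients already in place: the Künneth isomorphism \eqref{eqKunnethYSpecSec} for the modified spectral sequence of $Y$, the explicit description of $\Ea{Q}^r_{*,*}$ from Proposition \ref{propSpecSeqQForm}, and the comparison Proposition just proved, which says $\fa_*^1$ is an isomorphism for $p>q$ (and $p=q=n$) and injective for $p=q$. First I would compute the non-diagonal entries of $\Ea{X}^1$. For $q<p<n$ the map $\fa_*^1$ is an isomorphism, so $\Ea{X}^1_{p,q}\cong \Ea{Y}^1_{p,q}$; by \eqref{eqKunnethYSpecSec} this is $\bigoplus_{q_1+q_2=q}\Ea{Q}^1_{p,q_1}\otimes\Lambda_{q_2}$, and since $p<n$ the only surviving summand of $\Ea{Q}^1_{p,*}$ is $\Ea{Q}^1_{p,0}\cong H_p(\dd Q)$ (the ang-shape of $\Ea{Q}^1$, see Figure \ref{figAngShaped}), giving $H_p(\dd Q)\otimes\Lambda_q$. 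For $p=n$ the map $\fa_*^1\colon \Ea{Y}^1_{n,q}\to\Ea{X}^1_{n,q}$ is an isomorphism by excision (the relation $\sim$ is trivial on the interior), and $\Ea{Y}^1_{n,q}=H_{n+q}(Y,\dd Y)\cong\bigoplus_{q_1+q_2=q+n}H_{q_1}(Q,\dd Q)\otimes\Lambda_{q_2}$ again by Künneth. The vanishing for $q>p$ is the dimensional bound already recorded before Figure \ref{figSpecSeqGeneral}. This establishes part (1); part (2) is just the restatement of the injectivity half of the comparison Proposition at $p=q<n$ together with the isomorphism at $p=q=n$.

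For part (3), the point is that under $\fa_*^1$ the differentials of $\Ea{X}^*$ are forced to agree with those of $\Ea{Y}^*$ wherever the comparison map is an isomorphism on both source and target. All nontrivial differentials $\difa{X}^r$ emanate from the column $p=n$: indeed for $p<n$ the entries of $\Ea{X}^1$ are concentrated on the $q=0$ row after passing to the second page — more precisely, $\difa{X}^1$ already kills everything off the diagonal in that range because $\Ea{X}^r=\E{X}^r$ for $r\geqslant 2$ and the ang-shape of $\Ea{Q}^*$ tensored with $\Lambda_*$ leaves no room for differentials not originating at $p=n$. So fix a summand $H_{q_1}(Q,\dd Q)\otimes\Lambda_{q_2}$ of $\Ea{X}^1_{n,q_1+q_2-n}$. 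Its image under $\fa_*^1$ (an isomorphism here) is the corresponding summand of $\Ea{Y}^1_{n,*}$, on which the differential is, by \eqref{eqKunnethYSpecSec} and Proposition \ref{propSpecSeqQForm}, the map $\difa{Q}^r\otimes\id_{\Lambda_{q_2}}$ with $\difa{Q}^r=\delta_{q_1}$ and $r=n-q_1+1$, landing in $\Ea{Q}^r_{q_1-1,0}\otimes\Lambda_{q_2}=H_{q_1-1}(\dd Q)\otimes\Lambda_{q_2}$, a summand of $\Ea{Y}^1_{q_1-1,q_2}$. Transporting back via $\fa_*^1$: if $q_1-1>q_2$ the target map $\fa_*^1$ is an isomorphism, so $\difa{X}^r$ is $\delta_{q_1}\otimes\id_\Lambda$ landing in $\Ea{X}^1_{q_1-1,q_2}\cong H_{q_1-1}(\dd Q)\otimes\Lambda_{q_2}$; if $q_1-1=q_2$ the target map is merely the injection of part (2), so $\difa{X}^r$ is the composite $\fa_*^1\circ(\delta_{q_1}\otimes\id_\Lambda)$ into $\Ea{X}^*_{q_1-1,q_1-1}$. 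All other differentials vanish for degree reasons. This is exactly the trichotomy in the statement.

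The one place demanding genuine care — and the main obstacle — is the case $q_1-1=q_2$, i.e.\ a differential whose target sits \emph{on the diagonal} of $\Ea{X}^*$. There the comparison map $\fa_*^1$ is not known to be an isomorphism, only injective, so the naturality square does not immediately determine $\difa{X}^r$; one must argue that commutativity of $\fa_*^r$ with differentials, applied at the later page $r=n-q_1+1$ where $\Ea{X}^r=\E{X}^r$ and $\Ea{Y}^r=\E{Y}^r$, pins down $\difa{X}^r$ as the asserted composite — using that $\fa^r_*$ on the source column $p=n$ is still an isomorphism (it is induced by the excision isomorphism, which survives to every page since the column $p=n$ supports no incoming differentials) and that injectivity of $\fa_*^1$ on the diagonal is compatible with the passage to $E^r$. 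One should also check that the differentials have pairwise distinct bidegrees so that no extension or composition ambiguity arises — this follows because the source bidegree $(n,q_1+q_2-n)$ together with $r=n-q_1+1$ recovers $q_1$ and hence $q_2$. I would spell out the diagonal case as the only non-formal step and leave the rest to the cited results.
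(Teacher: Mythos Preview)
Your approach is the paper's: the theorem is asserted there as a direct consequence of the comparison proposition, the K\"unneth formula \eqref{eqKunnethYSpecSec}, and Proposition~\ref{propSpecSeqQForm}, and you correctly assemble these ingredients. Your treatment of the diagonal-target case is in fact more careful than the paper's one-line assertion.

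One passage is garbled, though. You write that for $p<n$ ``the entries of $\Ea{X}^1$ are concentrated on the $q=0$ row after passing to the second page'' and that ``$\difa{X}^1$ already kills everything off the diagonal in that range.'' Both claims are false: by construction $\difa{X}^1$ is \emph{zero} for $p<n$ (it is the residual differential after $\difm{X}$ has already been factored out), and the below-diagonal entries $\Ea{X}^1_{p,q}\cong H_p(\dd Q)\otimes\Lambda_q$ persist through the spectral sequence and contribute to $H_*(X)$ --- this is precisely what Theorem~\ref{thmHomolX}(1) records. The correct argument that $\difa{X}^r$ vanishes on sources with $p<n$ is the naturality you already use elsewhere: for $q<p<n$ the map $\fa_*^r$ is surjective on the source (iso, by induction on $r$) and $\difa{Y}^r=0$ there, so $\difa{X}^r\circ\fa_*^r=\fa_*^r\circ\difa{Y}^r=0$ forces $\difa{X}^r=0$; for $q=p<n$ the target sits strictly above the diagonal and is zero. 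With that correction the rest of your plan goes through.
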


\subsection{Diagonal terms of the spectral sequence}

Our next goal is to compute the diagonal terms $\Ea{X}^1_{q,q}$,
since they are not described explicitly by Theorem
\ref{thmEplus1struct}. In this subsection we state the results
about the dimensions of these modules. The proofs are given in the
next section.

Let $\br_p(S)=\dim\Hr_p(S)$ for $p<n$. If $Q$ is a Buchsbaum
pseudo-cell complex, we have $\dim\Hr_p(\dd Q) = \br_p(S_Q)$,
since $S_Q$ is homologous to $\dd Q$ by Proposition
\ref{propUnivers}. Let $h_q(S)$, $h'_q(S)$, and $h''_q(S)$ be the
$h$-, $h'$-, and $h''$-numbers of a simplicial poset $S$ (see
definitions in Section \ref{secHvectors}).

\begin{thm}\label{thmBorderStruct}
In the notation and under conditions of Theorem
\ref{thmEplus1struct} there holds
\[
\dim \Ea{X}^1_{q,q} = h_q(S_Q)+{n\choose
q}\sum\limits_{p=0}^{q}(-1)^{p+q}\br_p(S_Q)
\]
for $q\leqslant n-1$.
\end{thm}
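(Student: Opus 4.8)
The plan is to obtain $\dim\Ea{X}^1_{q,q}$ as an Euler characteristic of the $q$-th column of $(\E{X}^1_{*,*},\difm{X})$, since by Theorem~\ref{thmEplus1struct} every term of that column except the diagonal one is already identified. Fix $q\leqslant n-1$. Since $\E{X}^1_{p,q}=H_{p+q}(X_p,X_{p-1})=0$ for $p<q$ by dimension, and since $\difm{X}$ coincides with $\dif{X}^1$ for $p\leqslant n-1$ while it annihilates $\E{X}^1_{n,q}$, the bounded complex $C_\bullet$ with $C_p=\E{X}^1_{p,q}$ for $q\leqslant p\leqslant n-1$ and differential $\dif{X}^1$ has $H_p(C_\bullet)=\Ea{X}^1_{p,q}$ in every degree. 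Comparing Euler characteristics and substituting $\dim\Ea{X}^1_{p,q}=\binom{n}{q}\br_p(S_Q)$ for $q<p<n$ (Theorem~\ref{thmEplus1struct}(1) together with the identification $\dim\Hr_p(\dd Q)=\br_p(S_Q)$ from Proposition~\ref{propUnivers}) I obtain
\[
\dim\Ea{X}^1_{q,q}=\sum_{p=q}^{n-1}(-1)^{p+q}\dim\E{X}^1_{p,q}-\binom{n}{q}\sum_{p=q+1}^{n-1}(-1)^{p+q}\br_p(S_Q),
\]
which reduces everything to computing $\dim\E{X}^1_{p,q}$ for $p\leqslant n-1$.

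For that I would use that each summand of $\E{X}^1_{p,q}=H_{p+q}(X_p,X_{p-1})=\bigoplus_{\dim F_I=p}H_{p+q}(X_I,\dd X_I)$ comes from a quotient construction of exactly the same kind, one dimension lower. Indeed, since $T_I\subseteq T_J$ for all $J\geqslant I$, the $T_I$-coordinates are collapsed over the whole of $F_I$, so $X_I\cong(F_I\times(T^n/T_I))/\simc$ is the quotient construction over the face $F_I$ with the characteristic function induced on $\lk_{S_Q}I\cong S_{F_I}$ and torus $T^n/T_I\cong T^p$. Now $F_I$ (for $I\neq\minel$) is a Cohen--Macaulay pseudo-cell complex: its proper faces are proper faces of $Q$, hence acyclic, and $H_*(F_I,\dd F_I)$ is concentrated in degree $\dim F_I=p$ by Definition~\ref{definBuchPCcomplex}. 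Applying Theorem~\ref{thmEplus1struct}(1) to $F_I$ in place of $Q$ therefore gives $H_{p+q}(X_I,\dd X_I)\cong H_p(F_I,\dd F_I)\otimes H_q(T^n/T_I)$, of dimension $\binom{p}{q}\dim H_p(F_I,\dd F_I)$. Summing over faces, $\dim\E{X}^1_{p,q}=\binom{p}{q}\,c_p$ with
\[
c_p=\dim\E{Q}^1_{p,0}=\sum_{\dim F_I=p}\dim H_p(F_I,\dd F_I)=\sum_{\dim F_I=p}\dim\Hr_{p-1}(\lk_{S_Q}I)=\sum_{\dim F_I=p}h_p(\lk_{S_Q}I),
\]
the last step because $\lk_{S_Q}I$ is Cohen--Macaulay of dimension $p-1$, so its top reduced Betti number equals its top $h$-number.

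Substituting $\dim\E{X}^1_{p,q}=\binom{p}{q}c_p$ reduces Theorem~\ref{thmBorderStruct} to the purely combinatorial identity, for the Buchsbaum simplicial poset $S=S_Q$ of dimension $n-1$:
\[
\sum_{p=q}^{n-1}(-1)^{p+q}\binom{p}{q}\sum_{\dim F_I=p}h_p(\lk_{S}I)-\binom{n}{q}\sum_{p=q+1}^{n-1}(-1)^{p+q}\br_p(S)=h_q(S)+\binom{n}{q}\sum_{p=0}^{q}(-1)^{p+q}\br_p(S).
\]
This is the substantive part, and it is where I would invoke the combinatorial machinery of Section~\ref{secHvectors}: one must express the summed link $h$-numbers $\sum_{\dim F_I=p}h_p(\lk_S I)$ through the $f$-vector and the reduced Betti numbers of $S$ by a simplicial-poset analogue of Schenzel's formula for Buchsbaum complexes, and then check that the resulting alternating double sum telescopes to the right-hand side. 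The topological half above is essentially formal; the combinatorial identity is the main obstacle, which is precisely why the proof is deferred to the $h$-vector section. As a consistency check, at $q=n-1$ the left-hand side degenerates to $c_{n-1}=\dim\E{X}^1_{n-1,n-1}=\dim\Ea{X}^1_{n-1,n-1}$, and when $S$ is a homology manifold every link is a homology sphere, so $c_p=f_{n-1-p}(S)$ and the identity becomes a classical relation of Schenzel type among $h$, $h'$ and the Betti numbers.
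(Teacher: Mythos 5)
Your reduction coincides with the paper's own strategy: the paper also computes $\dim\Ea{X}^1_{q,q}$ by comparing the Euler characteristic of the $q$-th row of $(\E{\dd X}^1_{*,*},\dif{\dd X}^1)$ with that of its homology, and its key input $\dim\E{\dd X}^1_{p,q}=\binom{p}{q}\ft_{n-p-1}(S_Q)$, where $\ft_k(S)=\sum_{\dim I=k}\dim\Hr_{n-1-|I|}(\lk_S I)$, is exactly your $\binom{p}{q}c_p$ (Lemma \ref{lemmaChic1}). Everything up to and including that point in your write-up is correct, including the identification of $X_I$ with a lower-dimensional quotient construction and the concentration of $H_*(F_I,\dd F_I)$ in degree $\dim F_I$.

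The gap is that you stop at what you yourself call the substantive part. After your substitution the theorem is equivalent to the identity
\[
\sum_{p=q}^{n-1}(-1)^{p+q}\binom{p}{q}\ft_{n-p-1}(S)\;=\;h_q(S)-(-1)^q\binom{n}{q}\bigl(1-\chi(S)\bigr),
\]
which, combined with $\chi(S)-1=\sum_{p}(-1)^p\br_p(S)$, yields the stated formula; this identity is the actual content of the theorem and is nowhere verified in your proposal. It is also not quite ``Schenzel's formula'' (which compares $h'$ with $h$ and the Betti numbers of $S$ itself): what is needed is the relation \eqref{eqFtildeToH} between the $h$-numbers and the summed top link Betti numbers $\ft_k$. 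The paper establishes it as Proposition \ref{propFtildeToF}: from $f_S^{(k)}(t)=k!\sum_{|I|=k}f_{\lk I}(t)$ and $f_{\lk I}(-1)=1-\chi(\lk_S I)=(-1)^{n-|I|}\dim\Hr_{n-|I|-1}(\lk_S I)$ one gets $f_S^{(k)}(-1)=(-1)^{n-k}k!\,\ft_{k-1}(S)$, hence the Taylor expansion $f_S(t)=(1-\chi(S))+(-1)^n\sum_{k\geqslant 0}\ft_k(S)(-t-1)^{k+1}$; substituting $t/(1-t)$ and comparing coefficients gives \eqref{eqFtildeToH}. You would need to supply this computation, or an equivalent one, for the proof to be complete; the assertion that the ``alternating double sum telescopes'' is precisely what remains to be checked.
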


\begin{thm}\label{thmBorderManif}
Let $Q$ be a Buchsbaum manifold with corners and $X=(Q\times
T^n)/\simc$. Then:

(1) $\dim \Ea{X}^1_{q,q}=h'_{n-q}(S_Q)$ for $q\leqslant n-2$, and
$\dim\Ea{X}^1_{n-1,n-1}=h'_1(S_Q)+n$.

(2) $\dim \E{X}^2_{q,q} = \dim\Ea{X}^2_{q,q} = h'_{n-q}(S_Q)$ for
$0\leqslant q\leqslant n$.
\end{thm}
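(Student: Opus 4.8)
The plan is to reduce Theorem~\ref{thmBorderManif} to the combinatorial computation of Theorem~\ref{thmBorderStruct} together with the classical relation between $h'$- and $h$-numbers of a Buchsbaum simplicial poset. Recall that for a Buchsbaum simplicial poset $S$ of dimension $n-1$ one has the identity
\[
h'_i(S) = h_i(S) + \binom{n}{i}\sum_{p=0}^{i-1}(-1)^{i-p-1}\br_{p-1}(S),
\]
which after reindexing (using $\dim S = n-1$ and $\br_p(S_Q) = \dim\Hr_p(\dd Q)$) matches, up to the substitution $i \leftrightarrow q$, the expression appearing on the right-hand side of Theorem~\ref{thmBorderStruct}. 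So part~(1) of Theorem~\ref{thmBorderManif} is essentially a bookkeeping translation: I would first verify that for a Buchsbaum manifold with corners the formula of Theorem~\ref{thmBorderStruct} can be rewritten as $h'_{n-q}(S_Q)$ for $q\le n-2$. The one subtlety is the top interior dimension: when $q = n-1$ the term $\br_{n-1}$ does not enter the sum in Theorem~\ref{thmBorderStruct} (the range is $p\le q = n-1$, but $\br_p$ is only defined for $p<n$, and the manifold condition controls $\Hr_{n-1}(\dd Q)$), which accounts for the extra summand $+n$; I would isolate this case and check that the discrepancy is exactly $n = \binom{n}{n-1}\br_{n-1}$ coming from the fact that $S_Q$ is a homology manifold and $\dd Q$ is connected/orientable, or more simply that the $q=n-1$ value is $h_{n-1}(S_Q) + \binom{n}{n-1}\sum_{p=0}^{n-2}(-1)^{p+n-1}\br_p + n$, and the first two terms assemble into $h'_1(S_Q)$ by the Dehn--Sommerville-type symmetry $h'_{n-i} \leftrightarrow h'_i$ valid for homology manifolds (or directly from the definition of $h'_1$). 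I expect the bulk of the work here to be purely notational.

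For part~(2) the plan is to pass from the first page $\Ea{X}^1_{q,q}$ to the second page $\Ea{X}^2_{q,q} = \E{X}^2_{q,q}$ by computing the homology of $\difm{X}$ and then of $\difa{X}^1$ along the diagonal, using the explicit description of all differentials and non-diagonal terms from Theorem~\ref{thmEplus1struct}. The key observation is that the only differentials entering or leaving the diagonal spot $(q,q)$ on pages $r\ge 1$ are: the incoming differential $\difa{X}^r$ with $r = n-q_1+1$, $q_1 - 1 = q_2 = q$ from the top row $\Ea{X}^1_{n,*}$, which by Theorem~\ref{thmEplus1struct}(3) factors as $\fa_*^1\circ(\delta_{q+1}\otimes\id)$ and lands in the summand $H_q(\dd Q)\otimes\Lambda_q \hookrightarrow \Ea{X}^1_{q,q}$; and there is no nontrivial outgoing differential from $(q,q)$ since any such would have target $(q-r, q+r-1)$ with $q+r-1 > q-r$, i.e. strictly above the diagonal, where $\Ea{X}$ vanishes by the Proposition. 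Therefore on the diagonal the spectral sequence degenerates after accounting for this single family of incoming differentials, and
\[
\dim\E{X}^\infty_{q,q} = \dim\Ea{X}^2_{q,q} = \dim\Ea{X}^1_{q,q} - \rk\bigl(\delta_{q+1}\colon H_{q+1}(Q,\dd Q)\to H_q(\dd Q)\bigr),
\]
where I use that $\fa_*^1$ restricted to $H_q(\dd Q)\otimes\Lambda_q$ is injective (Theorem~\ref{thmEplus1struct}(2)).

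It remains to show that subtracting $\rk\delta_{q+1}$ from $h'_{n-q}(S_Q)$ (for $q\le n-2$) and from $h'_1(S_Q)+n$ (for $q = n-1$) always gives back $h'_{n-q}(S_Q)$ — i.e. that the relevant connecting homomorphisms $\delta_{q+1}$ have rank $0$ for $q\le n-2$ and rank $n$ for $q=n-1$. For $q \le n-2$ this is where the \emph{manifold} hypothesis does the real work: by Proposition~\ref{propSpecSeqQForm} the map $\difa{Q}^{r}$ is $\delta_{n+1-r}$, and for a Buchsbaum manifold with corners $Q$ all faces are orientable manifolds with boundary, so $H_*(Q,\dd Q)$ is governed by Poincar\'e--Lefschetz duality; the point is that $H_{q+1}(Q,\dd Q)$ is concentrated away from the range that would map nontrivially into $H_q(\dd Q)$ except at the very top, since $S_Q$ being a homology $(n-1)$-manifold forces the non-top local/relative homology to vanish. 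Concretely I would argue: $\dd Q \simeq |S_Q|$ is a closed homology $(n-1)$-manifold, $Q$ is a compact homology $n$-manifold with boundary $\dd Q$, hence $H_{q+1}(Q,\dd Q)\cong H^{n-q-1}(Q)\cong \Hr^{n-q-1}(\pt) = 0$ for $0<n-q-1$, i.e. for $q<n-1$, using acyclicity of $Q$ itself is \emph{not} assumed — so instead one uses the long exact sequence of $(Q,\dd Q)$ plus the fact that proper faces are acyclic to pin down $H_*(Q,\dd Q)$, and the only surviving connecting map is $\delta_1\colon H_1(Q,\dd Q)\to H_0(\dd Q)$ at $q = n-1$, whose rank equals $\dim\ker(H_0(\dd Q)\to H_0(Q)) $; since $\dd Q$ has, by the manifold structure, the "right" number of components this rank is $n$. \textbf{The main obstacle} is exactly this last point: correctly identifying $\rk\delta_{q+1}$ for all $q$ using only the Buchsbaum-manifold hypotheses (orientability plus acyclic proper faces) rather than full acyclicity of $Q$, and getting the exceptional value $n$ at $q=n-1$ to come out right; everything else is either cited (Theorems~\ref{thmEplus1struct}, \ref{thmBorderStruct}, Proposition~\ref{propSpecSeqQForm}) or elementary reindexing of $h$- versus $h'$-vectors.
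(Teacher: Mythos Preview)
Your plan for part~(1) is essentially the paper's own argument: start from Theorem~\ref{thmBorderStruct}, use that $S_Q$ is a homology manifold, and convert $h_q$ into $h'_{n-q}$ via Dehn--Sommerville relations and Poincar\'e duality $\be_i(S_Q)=\be_{n-1-i}(S_Q)$. That is fine once carried out carefully.

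Part~(2), however, has a genuine gap. You are computing the wrong thing. The differential $\difa{X}^1$ has bidegree $(-1,0)$ and, by its definition, is nonzero only on the column $p=n$: it sends $\Ea{X}^1_{n,q}\to\Ea{X}^1_{n-1,q}$ and is zero elsewhere. Hence for $q\le n-2$ no first-page differential touches $(q,q)$ at all, and $\Ea{X}^2_{q,q}=\Ea{X}^1_{q,q}$ holds \emph{trivially}; part~(1) then gives the answer immediately. The differential you describe, with source $H_{q+1}(Q,\dd Q)\otimes\Lambda_q$ and target in $\Ea{X}^*_{q,q}$, is $\difa{X}^{\,n-q}$, which acts on page $n-q\ge 2$ and is irrelevant for computing the second page. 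This is also why your displayed equality $\dim\E{X}^{\infty}_{q,q}=\dim\Ea{X}^2_{q,q}$ is false in general: higher differentials do hit the diagonal.

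Your attempt to salvage the argument by showing $\rk\delta_{q+1}=0$ for $q\le n-2$ cannot work: the Buchsbaum hypothesis says nothing about $H_*(Q)$, so the connecting maps $\delta_{q+1}$ need not vanish (and indeed Theorem~\ref{thmHtwoPrimes} shows $\Ea{X}^{\infty}_{q,q}$ is genuinely smaller than $\Ea{X}^2_{q,q}$ in the cone case). At $q=n-1$ you also have the indices wrong: the relevant map is $\delta_n\colon H_n(Q,\dd Q)\to H_{n-1}(\dd Q)$, not $\delta_1$, and its rank is $1$ (it is injective from a one-dimensional source, since $H_n(Q)=0$ for a connected $n$-manifold with nonempty boundary). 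The correction of $n$ arises because the actual incoming differential is $\delta_n\otimes\id_{\Lambda_{n-1}}$, contributing $\binom{n}{n-1}\cdot\rk\delta_n=n$. Finally, $q=n$ needs a separate one-line check: $\Ea{X}^1_{n,n}\cong H_n(Q,\dd Q)\otimes\Lambda_n$ has dimension $1=h'_0(S_Q)$ and survives unchanged.
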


For the cone over Buchsbaum simplicial poset, the diagonal
components of $\infty$-page also have a clear combinatorial
meaning.

\begin{thm}\label{thmHtwoPrimes}
Let $S$ be a Buchsbaum simplicial poset, $P=P(S)$ be the cone over
its geometric realization, and $X=(P\times T^n)/\simc$. Then
\[
\dim\E{X}^{\infty}_{q,q}=\dim\Ea{X}^{\infty}_{q,q}=h''_q(S)
\]
for $0\leqslant q\leqslant n$.
\end{thm}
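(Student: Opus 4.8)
The plan is to read the diagonal $\infty$-terms off the description of the first page and its differentials supplied by Theorems~\ref{thmEplus1struct} and~\ref{thmBorderStruct}. Since $\Ea{X}^r_{*,*}=\E{X}^r_{*,*}$ for $r\geqslant2$, it suffices to compute $\dim\Ea{X}^{\infty}_{q,q}$. By Theorem~\ref{thmEplus1struct}(3) every nonzero differential of $\Ea{X}^r_{*,*}$ starts in the column $p=n$; consequently, for $q<n$ the diagonal module $\Ea{X}^r_{q,q}$ supports no outgoing differential, while $\Ea{X}^r_{n,n}$ supports neither an incoming nor an outgoing one (its only possible target $\Ea{X}^r_{n-r,n+r-1}$ lies strictly above the diagonal, where all terms vanish). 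Hence $\Ea{X}^{\infty}_{n,n}=\Ea{X}^1_{n,n}=H_n(P,\dd P)\otimes\Lambda_n$, and for $q<n$ the term $\Ea{X}^{\infty}_{q,q}$ is the cokernel of the sum of all differentials landing at the position $(q,q)$.

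Next I would pin down that single incoming differential. By Theorem~\ref{thmEplus1struct}(3) the position $(q,q)$ with $q<n$ is hit exactly once, by the differential indexed by $q_1=q+1$, $q_2=q$ (so $q_1-1=q_2$); it occurs on the page $r=n-q_1+1=n-q$ and equals the composite $\fa_*^1\circ(\delta_{q+1}\otimes\id_{\Lambda})$ on the summand $H_{q+1}(P,\dd P)\otimes\Lambda_q$ of the column $p=n$, where $\delta_{q+1}\colon H_{q+1}(P,\dd P)\to H_q(\dd P)$ is the connecting homomorphism of the pair $(P,\dd P)$. By the same item this summand is acted on by no other differential, and, lying in the column $p=n$, it receives nothing, so it is present and untouched at the start of page $n-q$; likewise $\Ea{X}^{n-q}_{q,q}=\Ea{X}^1_{q,q}$. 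Since $\fa_*^1$ is injective (Theorem~\ref{thmEplus1struct}(2)) and $\dim\Lambda_q=\binom{n}{q}$, this gives
\[
\dim\Ea{X}^{\infty}_{q,q}=\dim\Ea{X}^1_{q,q}-\binom{n}{q}\dim(\im\delta_{q+1}),\qquad 0\leqslant q\leqslant n-1 .
\]

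Finally I would evaluate the right-hand side. As $P=\cone|S|$ is contractible and $\dd P=|S|$ carries the homology of $S$, the long exact sequence of $(P,\dd P)$ forces $\delta_k\colon H_k(P,\dd P)\to H_{k-1}(\dd P)$ to be injective with image $\Hr_{k-1}(\dd P)$ for every $k\geqslant1$; in particular $\dim(\im\delta_{q+1})=\br_q(S)$ for $q\leqslant n-1$ and $\dim H_n(P,\dd P)=\br_{n-1}(S)$. Substituting this together with the value of $\dim\Ea{X}^1_{q,q}$ from Theorem~\ref{thmBorderStruct} gives, for $q\leqslant n-1$,
\[
\dim\Ea{X}^{\infty}_{q,q}=h_q(S)+\binom{n}{q}\sum_{p=0}^{q}(-1)^{p+q}\br_p(S)-\binom{n}{q}\br_q(S)=h_q(S)+\binom{n}{q}\sum_{p=0}^{q-1}(-1)^{p+q}\br_p(S),
\]
while $\dim\Ea{X}^{\infty}_{n,n}=\br_{n-1}(S)$. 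It then remains to record the combinatorial identification (to be placed in Section~\ref{secHvectors}) that these numbers are the $h''$-numbers: for $q\leqslant n-1$ the displayed expression equals $h'_q(S)-\binom{n}{q}\br_{q-1}(S)=h''_q(S)$ by Schenzel's formula for the $h'$-vector of a Buchsbaum complex and the definition of $h''$, and for $q=n$ one uses $h''_n(S)=h'_n(S)$ together with the Euler-characteristic identity $h_n(S)=(-1)^{n-1}\chir(|S|)$, which yields $h'_n(S)=\br_{n-1}(S)$. Since $\E{X}^{\infty}=\Ea{X}^{\infty}$, the theorem follows. The main obstacle is the spectral-sequence bookkeeping of the second paragraph — verifying that on each page the relevant summand of the $p=n$ column survives intact and that the spot $(q,q)$ is touched by nothing other than $\fa_*^1\circ(\delta_{q+1}\otimes\id_{\Lambda})$; once the complete differential structure of Theorem~\ref{thmEplus1struct} is granted this is routine, and the remainder reduces to the known combinatorics of $h'$- and $h''$-vectors.
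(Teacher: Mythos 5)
Your proposal is correct and follows essentially the same route as the paper: identify via Theorem~\ref{thmEplus1struct} the single differential $\fa_*^1\circ(\delta_{q+1}\otimes\id_\Lambda)$ hitting the diagonal, use contractibility of $P=\cone|S|$ to get $\dim(\im\delta_{q+1})=\br_q(S)$, subtract $\binom{n}{q}\br_q(S)$ from the value of $\dim\Ea{X}^1_{q,q}$ given by Theorem~\ref{thmBorderStruct}, and match with the definition of $h''_q$, with the case $q=n$ handled by \eqref{eqHnprime}. The only cosmetic difference is that you invoke ``Schenzel's formula'' where the paper simply uses its own definitions of $h'$ and $h''$.
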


\begin{cor}
If $S$ is Buchsbaum, then $h''_i(S)\geqslant 0$.
\end{cor}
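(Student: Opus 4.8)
The plan is to read off the corollary directly from Theorem~\ref{thmHtwoPrimes}: once $h''_q(S)$ is identified with the dimension of a term on the $\infty$-page of the spectral sequence $\E{X}^{*}_{*,*}$ of an actual topological space, nonnegativity is automatic. So essentially nothing new has to be proved; the one thing standing between Theorem~\ref{thmHtwoPrimes} and the corollary is the need to construct the space $X=(P\times T^n)/\simc$ in the first place, i.e. to exhibit a characteristic function on $P=P(S)$.

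First I would reduce to the case $\ko=\Qo$. If $S$ is Buchsbaum over $\Zo$, then it is Buchsbaum over $\Qo$ as well, and the ranks $\br_p(S)$ of its integral homology agree with its rational Betti numbers; since $h''_q(S)$ is a fixed integral-linear combination of the $h_q(S)$, the binomial coefficients $\binom{n}{q}$, and the numbers $\br_p(S)$ (see Section~\ref{secHvectors}), its value is the same computed over $\Zo$ or over $\Qo$. Hence it suffices to work with $\ko=\Qo$, where $S$ is Buchsbaum and $\dim S=n-1$.

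Second, I would produce a $\Qo$-characteristic function $\lambda$ on the cone $P=P(S)$. The key point is that over a field the $\sta{\Qo}$-condition only requires injectivity of the relevant map of first homology groups, since splitting of an injection of $\Qo$-vector spaces is automatic, and every simplex of $S$ has at most $n$ vertices. So I would place the vertices of $S$ on the moment curve: pick pairwise distinct integers $t_1,\dots,t_m$ with $m=|\ver(S)|$ and let $\lambda(i)$ be the $1$-dimensional subgroup of $T^n$ spanned by the primitive integral vector proportional to $(1,t_i,\dots,t_i^{n-1})$. For the vertices $i_1,\dots,i_k$ ($k\leqslant n$) of any simplex, the corresponding vectors are linearly independent over $\Qo$ by the Vandermonde determinant, so $\sta{\Qo}$ holds. (The same generic choice works over any infinite field, but $\Qo$ is all I need.)

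Finally, since the cone $P=P(S)$ over a (pure) Buchsbaum simplicial poset is a Buchsbaum pseudo-cell complex with underlying poset $S_P=S$, as recorded in Section~\ref{SecPreliminaries}, the hypotheses of Theorem~\ref{thmHtwoPrimes} are satisfied for $X=(P\times T^n)/\simc$, and it gives $h''_q(S)=\dim\E{X}^{\infty}_{q,q}$ for $0\leqslant q\leqslant n$; the right-hand side is the dimension of a $\Qo$-vector space, hence $\geqslant 0$, and $h''_q(S)=0$ outside this range by convention. I do not anticipate any real obstacle: all the substance is in Theorem~\ref{thmHtwoPrimes}, and the construction of $\lambda$ is routine linear algebra; the only mild subtlety is the field-of-definition bookkeeping handled in the reduction step.
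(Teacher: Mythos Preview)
Your proposal is correct and follows essentially the same approach as the paper: the paper's proof simply asserts that ``for any simplicial poset $S$ there exists a characteristic function on $P=P(S)$ over rational numbers'' and then applies Theorem~\ref{thmHtwoPrimes}. You supply the details the paper omits --- the explicit moment-curve/Vandermonde construction of a $\Qo$-characteristic function and the bookkeeping reducing from $\Zo$ to $\Qo$ --- but the underlying argument is the same.
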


\begin{proof}
For any simplicial poset $S$ there exists a characteristic
function on $P=P(S)$ over rational numbers. Thus we can consider
the space $X=(P\times T^n)/\simc$ and apply Theorem
\ref{thmHtwoPrimes}.
\end{proof}

%
%
%
%
%
%
%

\section{Face vectors and ranks of diagonal components}\label{secHvectors}

In this section we prove Theorems \ref{thmBorderStruct},
\ref{thmBorderManif} and \ref{thmHtwoPrimes}.

\subsection{Preliminaries on face vectors}
First recall several standard definitions from combinatorial
theory of simplicial posets.

\begin{con}\label{conHvectors}
Let $S$ be a pure simplicial poset, $\dim S=n-1$. Let $f_i(S)$ be
the the number of $i$-dimensional simplices in $S$ and, in
particular, $f_{-1}(S)=1$ (the element $\minel\in S$ has dimension
$-1$). The array $(f_{-1},f_0,\ldots,f_{n-1})$ is called the
$f$-vector of~$S$. We write $f_i$ instead of $f_i(S)$ because the
poset is always clear from the context. Let $f_S(t)$ be the
generating polynomial: $f_S(t) = \sum_{i\geqslant 0}f_{i-1}t^i$.

Define $h$-numbers by the relation:
\begin{equation}\label{eqHvecDefin}
\sum_{i=0}^nh_it^i=\sum_{i=0}^nf_{i-1}t^i(1-t)^{n-i}=
(1-t)^nf_S\left(\dfrac{t}{1-t}\right).
\end{equation}
Let $\be_i(S) = \dim H_i(S)$, $\br_i(S)=\dim \Hr_i(S)$, and
\[
\chi(S)=\sum_{i=0}^{n-1}(-1)^i\be_i(S)=
\sum_{i=0}^{n-1}(-1)^if_i(S)
\]
\[
\chir(S)=\sum_{i=0}^{n-1}\br_i(S)=\chi(S)-1.
\]
Thus $f_S(-1)=1-\chi(S)$. Note that
\begin{equation}\label{eqHnEulerChar}
h_{n}=(-1)^{n-1}\chir(S).
\end{equation}

Define $h'$- and $h''$-numbers of $S$ by the formulas
\[
h_i'=h_i+{n\choose
i}\left(\sum_{j=1}^{i-1}(-1)^{i-j-1}\br_{j-1}(S)\right)\mbox{ for
} 0\leqslant i\leqslant n;
\]
\[
h_i'' = h_i'-{n\choose i}\br_{i-1}(S) = h_i+{n\choose
i}\left(\sum_{j=1}^{i}(-1)^{i-j-1}\br_{j-1}(S)\right)\mbox{ for }
0\leqslant i\leqslant n-1,
\]
and $h''_n=h'_n$. The summation over an empty set is assumed to be
zero. From \eqref{eqHnEulerChar} there follows
\begin{equation}\label{eqHnprime}
h'_n=h_n+\sum_{j=0}^{n-1}(-1)^{n-j-1}\br_{j-1}(S)= \br_{n-1}(S).
\end{equation}
\end{con}

\begin{stm}[Dehn--Sommerville relations]
For a homology manifold $S$ there holds
\begin{equation}\label{eqDehnSomQuasiManif}
h_i=h_{n-i}+(-1)^i{n\choose i}(1-(-1)^n-\chi(S)),
\end{equation}
or, equivalently:
\begin{equation}\label{eqDehnSomQuasiManifChir}
h_i=h_{n-i}+(-1)^i{n\choose i}(1+(-1)^n\chir(S)).
\end{equation}
Moreover, $h''_i=h''_{n-i}$.
\end{stm}

\begin{proof}
The first statement can be found in e.g. \cite{St} or
\cite[Thm.3.8.2]{BPnew}. Also see Remark \ref{remDehnSomManif}
below. The last statement follows from the definition of
$h''$-vector and Poincare duality $\be_i(S)=\be_{n-1-i}(S)$ (see
\cite[Lm.7.3]{No}).
\end{proof}

Now we introduce an auxiliary numerical characteristic of a
simplicial poset $S$.

\begin{defin}
Let $S$ be a Buchsbaum simplicial poset. For $i\geqslant 0$
consider the number
\[
\ft_i(S)=\sum_{I\in S,\dim I=i}\dim \Hr_{n-1-|I|}(\lk_SI).
\]
\end{defin}

For a homology manifold $S$ there holds $\ft_i=f_i$ since all
proper links are homology spheres. In general, there is another
formula connecting these quantities.

\begin{prop}\label{propFtildeToF}
For Buchsbaum simplicial poset $S$ there holds
\[
f_S(t)=(1-\chi(S))+(-1)^n\sum_{k\geqslant
0}\ft_{k}(S)\cdot(-t-1)^{k+1}.
\]
\end{prop}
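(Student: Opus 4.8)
The plan is to reduce the identity to a statement purely about $f$-vectors of links, then invoke the standard combinatorics. First I would recall two things that hold for any pure simplicial poset: the $f$-vector of the link of a simplex $I$ of dimension $i$ has length $n-i-1$ (since $\dim \lk_S I = n-1-|I| = n-2-i$), and the faces of $S$ are counted by summing over links, i.e. for each $j$ the number $f_j(S) = \sum_{I\leqslant J,\ \dim I = i}$-type identities relate $f_j(S)$ to $\sum_{\dim I = i} f_{j-i-1}(\lk_S I)$ with appropriate binomial bookkeeping. The cleanest route, however, is to start from the elementary combinatorial identity that for \emph{any} pure $(m-1)$-dimensional simplicial poset $L$,
\[
f_L(-1) = 1 - \chi(L) = (-1)^{m-1}\widetilde\chi(L),
\]
and that when $L$ is Buchsbaum the Euler characteristic of a link equals (up to sign) a single Betti number: $\widetilde\chi(\lk_S I) = (-1)^{n-2-i}\dim\Hr_{n-1-|I|}(\lk_S I)$ because $\lk_S I$ has reduced homology concentrated in top degree $n-1-|I| = n-2-i$. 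Hence $\ft_i(S) = \sum_{\dim I = i}(-1)^{n-2-i} \widetilde\chi(\lk_S I) = (-1)^{n-i}\sum_{\dim I=i}\bigl(f_{\lk_S I}(-1)-1\bigr)$, which already exhibits $\ft_i$ as an evaluation of link $f$-polynomials at $-1$.

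Next I would assemble the generating function. The key structural identity is the ``local'' decomposition of $f_S(t)$: every simplex of $S$ of dimension $j$ is obtained uniquely as a simplex of dimension $j-i-1$ in $\lk_S I$ for $I$ ranging over the simplices of dimension $i$ below it, and summing this over all $I$ with multiplicity gives a clean formula. Concretely, one has the polynomial identity
\[
\sum_{i\geqslant -1}\ \sum_{\dim I = i} f_{\lk_S I}(t)\, t^{i+1}
= f_S(t)\cdot(\text{something involving }t),
\]
and after matching degrees this collapses to the relation I want once I substitute $t \mapsto -t-1$ and use the previous paragraph's evaluation $\ft_i(S) = (-1)^{n-i}\sum_{\dim I = i}(f_{\lk_S I}(-1)-1)$. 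The term $f_{-1}(S) = 1$, i.e. the contribution of $I = \minel$, produces the $f_S(t)$ piece on the left, while the $-1$'s collected across all links produce, after the binomial expansion of $(-t-1)^{k+1}$, precisely the constant $1-\chi(S)$; the bookkeeping here is exactly the inclusion–exclusion that turns $\sum_i (-1)^i f_i = \chi(S)$ into the stated shift.

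The main obstacle I anticipate is getting the combinatorial substitution $t\mapsto -t-1$ and the sign $(-1)^n$ to land correctly — in particular, disentangling which $-1$ contributions from the links cancel against the leading $1-\chi(S)$ term and which survive inside the sum $\sum_{k\geqslant 0}\ft_k(S)(-t-1)^{k+1}$. It is essentially a single careful manipulation of generating polynomials, so rather than re-deriving the link-decomposition identity from scratch I would state it as a lemma (it is a formal consequence of the definition of $f_S(t)$ and of barycentric subdivision) and then verify the claimed formula by comparing coefficients of $t^j$ on both sides, checking the two extreme cases $j=0$ and $j=n$ (where \eqref{eqHnEulerChar} and the fact that $\ft_{n-1}(S)=\be_{n-1}(S)$ pin everything down) and invoking the Buchsbaum hypothesis only through the concentration of link homology used above.
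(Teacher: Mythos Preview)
Your proposal has a genuine gap at its central step. You correctly isolate the ingredients --- that $f_L(-1)=1-\chi(L)$, and that for a Buchsbaum poset the reduced Euler characteristic of a link is (up to sign) a single top Betti number, so that $\ft_{k-1}$ is essentially $\sum_{|I|=k} f_{\lk_SI}(-1)$ with the right sign. But your ``key structural identity'' is left as
\[
\sum_{I} f_{\lk_S I}(t)\,t^{|I|}=f_S(t)\cdot(\text{something}),
\]
and this is the wrong shape: that sum equals $f_S(2t)$, not a multiple of $f_S(t)$, so no substitution $t\mapsto -t-1$ in a single variable will produce the claimed expansion in powers of $(-t-1)$. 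The plan to ``state it as a lemma'' and then compare coefficients does not rescue this, because you never identify what the lemma actually is.

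The paper's argument supplies exactly the missing identity, and it is not multiplicative but a Taylor expansion. One uses
\[
\left(\frac{d}{dt}\right)^k f_S(t)=k!\sum_{|I|=k} f_{\lk_S I}(t),
\]
which for a simplicial poset is immediate from the fact that each $J$ has $\binom{|J|}{k}$ faces of rank $k$ below it. Evaluating at $t=-1$ gives $f_S^{(k)}(-1)=k!\sum_{|I|=k}(1-\chi(\lk_SI))=(-1)^{n-k}k!\,\ft_{k-1}$ by the Buchsbaum hypothesis, and then the Taylor expansion of $f_S$ at $-1$ reads off the desired formula in one line. Equivalently, the two-variable identity $\sum_{I} f_{\lk_SI}(s)\,t^{|I|}=f_S(s+t)$ specialised to $s=-1$ gives $f_S(t)=\sum_{k\geqslant 0}\bigl(\sum_{|I|=k}f_{\lk_SI}(-1)\bigr)(t+1)^k$ directly. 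Either form is the clean replacement for your unspecified ``local decomposition''; once you have it, the rest of your outline goes through without the coefficient-matching detour.
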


\begin{proof}
This follows from the general statement
\cite[Th.9.1]{MMP},\cite[Th.3.8.1]{BPnew}, but we provide an
independent proof for completeness. As stated in
\cite[Lm.3.7,3.8]{AyBuch} for simplicial complexes (and not
difficult to prove for simplicial posets):
\[
\frac{d}{dt}f_S(t)=\sum_{v\in\ver(S)}f_{\lk v}(t),
\]
and, more generally,
\[
\left(\frac{d}{dt}\right)^kf_S(t)=k!\sum_{I\in S, |I|=k}f_{\lk
I}(t).
\]
Thus for $k\geqslant 1$:
\begin{equation*}
\begin{split}
f_S^{(k)}(-1) &=k!\sum_{I\in S, |I|=k}(1-\chi(\lk_SI))= \\&=
k!\sum_{I\in S, |I|=k}(-1)^{n-|I|}\dim \Hr_{n-|I|-1}(\lk I)=
(-1)^{n-k}k!\ft_{k-1}.
\end{split}
\end{equation*}
The Taylor expansion of $f_S(t)$ at the point $-1$ has the form:
\[
f_S(t)=f_S(-1)+\sum_{k\geqslant
1}\frac{1}{k!}f_S^{(k)}(-1)(t+1)^k= (1-\chi(S))+\sum_{k\geqslant
0}(-1)^{n-k-1}\ft_k\cdot(t+1)^{k+1}.
\]
This finishes the proof.
\end{proof}

\begin{rem}\label{remDehnSomManif}
If $S$ is a homology manifold, then Proposition
\ref{propFtildeToF} implies
\[
f_S(t) = (1-(-1)^n-\chi(S))+(-1)^nf_S(-t-1),
\]
which is yet another equivalent form of Dehn--Sommerville
relations \eqref{eqDehnSomQuasiManif}.
\end{rem}

\begin{lemma}\label{lemmaHfromFt}
For Buchsbaum poset $S$ there holds
\[
\sum_{i=0}^nh_it^i = (1-t)^n(1-\chi(S)) + \sum_{k\geqslant
0}\ft_k\cdot(t-1)^{n-k-1}.
\]
\end{lemma}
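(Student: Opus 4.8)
The plan is to derive the claimed identity directly from Proposition \ref{propFtildeToF} by substituting $t\mapsto t/(1-t)$ and comparing with the defining relation \eqref{eqHvecDefin} for the $h$-numbers. First I would recall that
$\sum_{i=0}^n h_i t^i = (1-t)^n f_S\!\left(\tfrac{t}{1-t}\right)$,
so it suffices to compute $f_S\!\left(\tfrac{t}{1-t}\right)$ using the formula of Proposition \ref{propFtildeToF},
$f_S(s)=(1-\chi(S))+(-1)^n\sum_{k\geqslant 0}\ft_k(S)\cdot(-s-1)^{k+1}$,
and then multiply through by $(1-t)^n$.

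The key computation is elementary. Setting $s=\tfrac{t}{1-t}$, one has $-s-1=-\tfrac{t}{1-t}-1=-\tfrac{1}{1-t}$, hence $(-s-1)^{k+1}=(-1)^{k+1}(1-t)^{-(k+1)}$. Therefore
\[
(1-t)^n f_S\!\left(\tfrac{t}{1-t}\right)
=(1-t)^n(1-\chi(S))+(-1)^n\sum_{k\geqslant 0}\ft_k(S)\,(1-t)^n(-1)^{k+1}(1-t)^{-(k+1)}.
\]
The sign $(-1)^n(-1)^{k+1}$ in the $k$-th term equals $(-1)^{n+k+1}$, and $(1-t)^{n}(1-t)^{-(k+1)}=(1-t)^{n-k-1}$. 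Writing $(1-t)^{n-k-1}=(-1)^{n-k-1}(t-1)^{n-k-1}$ absorbs exactly this sign, since $(-1)^{n+k+1}(-1)^{n-k-1}=(-1)^{2n}=1$. This gives
\[
\sum_{i=0}^n h_i t^i=(1-t)^n(1-\chi(S))+\sum_{k\geqslant 0}\ft_k(S)\,(t-1)^{n-k-1},
\]
which is the assertion.

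There is essentially no obstacle: the only point requiring a word of care is the legitimacy of the substitution $t\mapsto t/(1-t)$, which is the same formal manipulation already used in \eqref{eqHvecDefin}, so it is valid as an identity of rational functions (equivalently, of formal power series after clearing denominators), and both sides are in fact polynomials in $t$ once the dust settles. I would also remark that the range of summation over $k$ is effectively finite, since $\ft_k(S)=0$ for $k\geqslant n$ (there are no simplices of dimension $\geqslant n$ in an $(n-1)$-dimensional poset), so no convergence issue arises.
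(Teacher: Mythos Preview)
Your proof is correct and follows exactly the approach indicated in the paper: substitute $t\mapsto t/(1-t)$ into Proposition~\ref{propFtildeToF} and use the defining relation~\eqref{eqHvecDefin}. The paper states this in one line without carrying out the sign bookkeeping, which you have done correctly.
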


\begin{proof}
Substitute $t/(1-t)$ in Proposition \ref{propFtildeToF} and apply
\eqref{eqHvecDefin}.
\end{proof}

Comparing the coefficients at $t^i$ in the identity of Lemma
\ref{lemmaHfromFt} we get:
\begin{equation}\label{eqFtildeToH}
h_i(S)=(1-\chi(S))(-1)^i{n\choose i}+\sum_{k\geqslant
0}(-1)^{n-k-i-1}{n-k-1\choose i} \ft_k(S).
\end{equation}

\subsection{Ranks of $\E{X}^1_{*,*}$}

To prove Theorem \ref{thmBorderStruct} we use the following
straightforward idea. The module $\Ea{X}^1$ is the homology of
$\E{X}^1$ with respect to the differential $\difm{X}$ of degree
$(-1,0)$. Theorem \ref{thmEplus1struct} describes the ranks of all
groups $\Ea{X}^1_{p,q}$ except for $p=q$; the terms
$\E{X}^1_{p,q}$ are known as well. Thus the ranks of the remaining
terms $\dim \Ea{X}^1_{q,q}$ can be found from the equality of
Euler characteristics, computed for $\E{X}^1$ and $\Ea{X}^1$. When
we pass from $\E{X}^1$ to $\Ea{X}^1$, the terms with $p=n$ do not
change; the other groups are the same as if we passed from $\E{\dd
X}^1$ to $\E{\dd X}^2$. Thus it is sufficient to perform
calculations with the truncated sequence $\E{\dd X}^*$.

Let $\chi^1_q$ be the Euler characteristic of the $q$-th row of
$\E{\dd X}^1_{*,*}$:
\begin{equation}\label{eqChi1def}
\chi_q^1=\sum_{p\leqslant n-1}(-1)^p\dim\E{\dd X}^1_{p,q}.
\end{equation}

\begin{lemma}\label{lemmaChic1}
For $q\leqslant n-1$ we have $\chi_q^1=(\chi(S_Q)-1){n\choose
q}+(-1)^qh_q(S_Q)$.
\end{lemma}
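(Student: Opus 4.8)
The plan is to compute $\chi_q^1 = \sum_{p\leqslant n-1}(-1)^p\dim\E{\dd X}^1_{p,q}$ directly from the description of the $E^1$-page. By definition of the filtration on $X$,
\[
\E{\dd X}^1_{p,q}=H_{p+q}(X_p,X_{p-1})\cong\bigoplus_{I\in S_Q,\ \dim F_I=p}H_{p+q}(X_I,\dd X_I),
\]
where $\dim F_I=p$ means $|I|=n-p$ (since $Q$ is a simple pseudo-cell complex of dimension $n$). Thus the contribution of a single face $F_I$ with $|I|=n-p$ to $\dim\E{\dd X}^1_{p,q}$ is the dimension of $H_{p+q}(X_I,\dd X_I)$. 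The first key step is therefore to identify $H_*(X_I,\dd X_I)$ in terms of the combinatorics of the link $\lk_{S_Q}I$ and the torus. Since $F_I$ is acyclic for $I\neq\minel$ (Buchsbaum condition) and $X_I=(F_I\times T^n)/\simc$ is, up to the quotient identification away from the subtorus $T_I$, essentially a product of $F_I$ with a torus quotient governed by $\lk_{S_Q}I$, the relative homology $H_{p+q}(X_I,\dd X_I)$ should decompose as a tensor product of $\Lambda_*$ with the homology of the "link complex" of $I$. Concretely I expect
\[
H_{p+q}(X_I,\dd X_I)\cong\bigoplus_{q_1+q_2=p+q}\widetilde{H}_{q_1-1}\bigl(\text{something built from }\lk_{S_Q}I\bigr)\otimes\Lambda_{q_2},
\]
and after summing over $I$ the $\Hr_{n-1-|I|}(\lk_SI)$ terms assemble into the quantities $\ft_k(S_Q)$. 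This is the step I expect to be the main obstacle: pinning down precisely which relative homology group appears and in which degree, so that the index bookkeeping comes out right.

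**Passing to the Euler characteristic.** Once each $\dim H_{p+q}(X_I,\dd X_I)$ is expressed through $\br_*$ of links and $\dim\Lambda_{q_2}={n\choose q_2}$, I substitute into \eqref{eqChi1def} and sum over $p$ and over $I$. The crucial simplification is that the alternating sum over $p$ (equivalently, over $|I|$ with sign $(-1)^{n-|I|}$) collapses the link homologies of intermediate degree, leaving only the top-degree term $\Hr_{n-1-|I|}(\lk_SI)$ for each face $I$, together with one global term coming from $I=\minel$ (where $F_{\minel}=Q$ itself enters, but in the \emph{truncated} sequence only $\dd Q$, contributing $\chi(\dd Q)$-type data). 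Grouping faces by their dimension $i=\dim I$ and recalling $\ft_i(S_Q)=\sum_{\dim I=i}\dim\Hr_{n-1-|I|}(\lk_SI)$ turns the double sum into $\sum_{k\geqslant 0}(\pm 1){n\choose q}(\text{coefficient})\ft_k(S_Q)$ plus a $(1-\chi(S_Q)){n\choose q}$ term. The sign patterns and binomial coefficients that emerge should match exactly the right-hand side of formula \eqref{eqFtildeToH}, up to the overall factor ${n\choose q}$ coming from $\dim\Lambda_q$.

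**Identifying the answer with $h_q$.** At this stage I will have an expression of the form $\chi_q^1=(1-\chi(S_Q)){n\choose q}\cdot(\text{something})+{n\choose q}\sum_k(\text{signed binomials})\ft_k(S_Q)$. Here I invoke the combinatorial identity \eqref{eqFtildeToH}, namely $h_q(S_Q)=(1-\chi(S_Q))(-1)^q{n\choose q}+\sum_{k\geqslant 0}(-1)^{n-k-q-1}{n-k-1\choose q}\ft_k(S_Q)$, which was derived earlier via Proposition \ref{propFtildeToF} and Lemma \ref{lemmaHfromFt}. Matching term by term, and absorbing the sign $(-1)^q$ appropriately, the computation yields $\chi_q^1=(\chi(S_Q)-1){n\choose q}+(-1)^q h_q(S_Q)$, which is exactly the claim. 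The one subtlety to handle carefully is the boundary of the range: for $q=n-1$ (and more generally near the top row) the relative groups $H_{p+q}(X_I,\dd X_I)$ can be nonzero only for rather restricted $p$, so I should double-check that the stated formula still holds at $q=n-1$ rather than acquiring a correction term; the hypothesis $q\leqslant n-1$ in the lemma is presumably exactly what makes the clean form valid, since the $p=n$ row is excluded in the truncated sequence.
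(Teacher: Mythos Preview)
Your overall strategy---compute $\dim\E{\dd X}^1_{p,q}$ face by face, express the result through the numbers $\ft_k(S_Q)$, and then invoke identity \eqref{eqFtildeToH}---is exactly the paper's approach. However, there is a concrete misidentification of the torus factor that would derail the bookkeeping.

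You write that $H_{p+q}(X_I,\dd X_I)$ decomposes as a tensor product with $\Lambda_*=H_*(T^n)$ and that the torus contributes an ``overall factor $\binom{n}{q}$.'' But the relation $\sim$ collapses the subtorus $T_I$ over the open cell $F_I^{\circ}$, so the quotient $X_I/\dd X_I$ is essentially $(F_I/\dd F_I)$ times $T^n/T_I$, not $T^n$. The correct torus factor is therefore $H_q(T^n/T_I)$, of dimension $\binom{p}{q}$ since $\dim(T^n/T_I)=n-|I|=p$, not $\binom{n}{q}$. Moreover, no ``Euler-characteristic collapse of intermediate link homologies'' is needed: the Buchsbaum hypothesis already gives $H_a(F_I,\dd F_I)=0$ for $a\neq p$, so only the single summand $H_p(F_I,\dd F_I)\otimes H_q(T^n/T_I)$ survives. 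Using $H_p(F_I,\dd F_I)\cong\Hr_{p-1}(\lk_{S_Q}I)$ (Proposition~\ref{propUnivers}) one obtains directly
\[
\dim\E{\dd X}^1_{p,q}=\binom{p}{q}\,\ft_{n-p-1}(S_Q),\qquad
\chi_q^1=\sum_{p\leqslant n-1}(-1)^p\binom{p}{q}\,\ft_{n-p-1}(S_Q),
\]
and the substitution $k=n-p-1$, $i=q$ into \eqref{eqFtildeToH} finishes the proof at once. With your $\binom{n}{q}$ in place of $\binom{p}{q}$ the sum would not match \eqref{eqFtildeToH} and the formula would fail. (There is also no ``global term from $I=\minel$'': in the truncated sequence $p\leqslant n-1$ forces $I\neq\minel$ throughout.)
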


\begin{proof}
By Proposition \ref{propUnivers} there is an isomorphism of
spectral sequences $\E{\dd Q}^*\to \E{\dd P(S_Q)}^*$. Thus, in
particular, for any $I\in S_Q\setminus\{\minel\}$, $|I|=n-p$ we
have an isomorphism
\begin{equation}\label{eqUnivLk}
H_p(F_I,\dd F_I)\cong H_p(G_I,\dd G_I)\cong H_{p-1}(\lk_{S_Q}I)
\end{equation}
where $G_I$ is the face of $P(S_Q)$ dual to $I$. The last
isomorphism in \eqref{eqUnivLk} is due to the long exact sequence
of the pair $(G_I,\dd G_I)$, since $G_I=\cone (\dd G_I)$ and $\dd
G_I\cong \lk_{S_Q}I$.

For $p<n$ we have
\begin{multline*}
\dim \E{\dd X}^1_{p,q} = \sum_{I, \dim F_I=p}\dim H_{p+q}(X_I,\dd X_I)=\\
= \sum_{I, |I|=n-p}\dim (H_p(F_I,\dd F_I)\otimes H_q(T^n/T_I)) =
{p\choose q}\cdot \ft_{n-p-1}(S_Q).
\end{multline*}
In the last equality we used \eqref{eqUnivLk} and the definition
of $\ft$-numbers. Therefore,
\begin{equation}\label{eqChi1eq}
\chi_q^1=\sum_{p\leqslant n-1}(-1)^p\dim\E{\dd
X}^1_{p,q}=\sum_{p\leqslant n-1}(-1)^p{p\choose
q}\ft_{n-p-1}(S_Q).
\end{equation}

Now substitute $i=q$ and $k=n-p-1$ in \eqref{eqFtildeToH} and
combine it with \eqref{eqChi1eq}.
\end{proof}

\subsection{Ranks of $\Ea{X}^1_{*,*}$}

By construction of the modified spectral sequence,
$\Ea{X}^1_{p,q}\cong\E{\dd X}^2_{p,q}$ for $p\leqslant n-1$. Let
$\chi^2_q$ be the Euler characteristic of $q$-th row of $\E{\dd
X}^2_{*,*}$:
\begin{equation}\label{eqChiE2def}
\chi^2_q = \sum_{p\leqslant n-1}(-1)^p\dim\E{\dd X}^2_{p,q}.
\end{equation}
Euler characteristics of the first and the second pages coincide:
$\chi_q^2=\chi_q^1$. By Theorem \ref{thmEplus1struct}, for $q<p<n$
we have
\[
\dim\Ea{X}^1_{p,q}={n\choose q}\be_p(S_Q).
\]
Lemma \ref{lemmaChic1} yields
\[
(-1)^q\dim\Ea{X}^1_{q,q}+\sum_{p=q+1}^{n-1}(-1)^p{n\choose q}
\be_p(S_Q)=(\chi(S_Q)-1){n\choose q}+(-1)^qh_q(S_Q).
\]
By taking into account the equality
$\chi(S_Q)=\sum_{p=0}^{n-1}\be_p(S_Q)$ and the obvious relation
between reduced and non-reduced Betti numbers, this proves
Theorem~\ref{thmBorderStruct}.

\subsection{Manifold case} Now we prove Theorem
\ref{thmBorderManif}. If $Q$ is a Buchsbaum manifold with corners,
then $S_Q$ is a homology manifold. Then Poincare duality
$\be_i(S_Q)=\be_{n-1-i}(S_Q)$ and Dehn--Sommerville relations
\eqref{eqDehnSomQuasiManifChir} imply
\begin{equation*}
\begin{split}
\dim\Ea{X}^1_{q,q}&=h_q+{n\choose q}\sum_{p=0}^q(-1)^{p+q}\br_p=\\
&=h_q-(-1)^q{n\choose q}+{n\choose
q}\sum_{p=0}^q(-1)^{p+q}\be_p=\\
&=h_q-(-1)^q{n\choose q}+{n\choose
q}\sum_{p=n-1-q}^{n-1}(-1)^{n-1-p+q}\be_p=\\
&=h_{n-q}+(-1)^q{n\choose
q}\left[-(-1)^n+(-1)^n\chi+\sum_{p=n-1-q}^{n-1}(-1)^{n-1-p}\be_p\right]=\\
&=h_{n-q}+(-1)^q{n\choose q}
\left[-(-1)^n+\sum_{p=0}^{n-q-2}(-1)^{p+n}\be_p \right].
\end{split}
\end{equation*}

The last expression in brackets coincides with
$\sum_{p=-1}^{n-q-2}(-1)^{p+n}\br_p$ whenever the summation is
taken over nonempty set, that is for $q\leqslant n-2$. Thus
$\dim\Ea{X}^1_{q,q}=h'_{n-q}$ for $q\leqslant n-2$. In the case
$q=n-1$ we have $\dim\Ea{X}^1_{n-1,n-1}=h_1+{n\choose
n-1}=h'_1+n$. This proves part (1) of Theorem
\ref{thmBorderManif}.

Part (2) follows easily. Indeed, for $q=n$ we have
\[
\dim\Ea{X}^2_{n,n}=\dim\Ea{X}^1_{n,n}={n\choose n}\dim H_n(Q,\dd
Q)=1=h'_0(S_Q)
\]
For $q=n-1$:
\[
\dim \Ea{X}^2_{n-1,n-1}=\dim\Ea{X}^1_{n-1,n-1}-{n\choose
n-1}\dim\im \delta_n=h'_1(S_Q),
\]
since the map $\delta_n\colon H_n(Q,\dd Q) \to H_{n-1}(\dd Q)$ is
injective and $\dim H_n(Q,\dd Q)=1$.

If $q\leqslant n-2$, then $\Ea{X}^2_{q,q}=\Ea{X}^1_{q,q}$, and the
statement follows from part (1).

\subsection{Cone case}

If $P = P(S) \cong \cone|S|$, then the map $\delta_i\colon
H_i(P,\dd P)\to \Hr_{i-1}(\dd P)$ is an isomorphism as follows
from the long exact sequence of the pair $(P,\dd P)$. Thus for
$q\leqslant n-1$, Theorem \ref{thmEplus1struct} implies
\[
\dim \Ea{X}^{\infty}_{q,q}= \dim \Ea{X}^1_{q,q}-{n\choose q}\dim
H_{q+1}(P,\dd P) = \dim \Ea{X}^1_{q,q}-{n\choose q}\br_q(S).
\]
By Theorem \ref{thmBorderStruct} this expression is equal to
\[
h_q+{n\choose q}
\left[\sum_{p=0}^q(-1)^{p+q}\br_p(S)\right]-{n\choose q}\br_q(S) =
h_q(S)+{n\choose q}\sum_{p=0}^{q-1}(-1)^{p+q}\br_p(S)=h''_{q}.
\]
The case $q=n$ follows from \eqref{eqHnprime}. Indeed, the term
$\Ea{X}^1_{n,n}$ survives in the spectral sequence, thus:
\[
\dim \Ea{X}^{\infty}_{n,n} = {n\choose n}\dim H_n(P,\dd
P)=\be_{n-1}(S) = h_{n}'(S)=h_n''(S).
\]
This proves Theorem \ref{thmHtwoPrimes}.

%
%
%
%
%
%
%

\section{Homology of $X$.}\label{secHomology}

In this section we suppose $\ko$ is a field. Theorem
\ref{thmEplus1struct} gives an additional grading on $H_*(X)$,
namely the one induced by degrees of exterior forms, as described
below. In the following $Q$ is an arbitrary Buchsbaum pseudo-cell
complex of dimension $n$.

\begin{figure}[h]
\begin{center}
    \begin{tikzpicture}[scale=.6]


        \filldraw[fill=black!40!green!95]
        (0,0)--(0,1)--(6,1)--(6,-4)--(5,-4)--(5,0)--cycle;
        \filldraw[fill=black!40!green!70]
        (0,1)--(0,2)--(6.05,2)--(6.05,-3)--(5.05,-3)--(5.05,1)--cycle;
        \filldraw[fill=black!40!green!50]
        (0,2)--(0,3)--(6.1,3)--(6.1,-2)--(5.1,-2)--(5.1,2)--cycle;
        \filldraw[fill=black!40!green!35]
        (0,3)--(0,4)--(6.15,4)--(6.15,-1)--(5.15,-1)--(5.15,3)--cycle;
        \filldraw[fill=black!40!green!20]
        (0,4)--(0,5)--(6.2,5)--(6.2,0)--(5.2,0)--(5.2,4)--cycle;
        \filldraw[fill=black!40!green!5]
        (0,5)--(0,6)--(6.25,6)--(6.25,1)--(5.25,1)--(5.25,5)--cycle;

        \draw[->, thick, black]  (0,0)--(0,7);
        \draw[->, thick, black]  (0,0)--(7,0);

        \draw (1,0)--(1,6); \draw (2,0)--(2,6); \draw
        (3,0)--(3,6); \draw (4,0)--(4,6); \draw (5.25,5)--(5.25,6);

        \draw (0,1)--(6.25,1); \draw (0,2)--(6.25,2); \draw
        (0,3)--(6.25,3); \draw (0,4)--(6.25,4); \draw (0,5)--(6.25,5);

        \draw (5,-1)--(6,-1); \draw (5,-2)--(6,-2); \draw (5,-3)--(6,-3);

        \draw (3,7) node{$\Ea{Y}^1_{*,*}$};

        \draw (-1,0.5) node{\tiny $\Ead{Y}{0}^1_{*,*}$}; \draw (-1,1.5) node{\tiny
        $\Ead{Y}{1}^1_{*,*}$}; \draw (-1,3.5) node{$\vdots$};
        \draw (-1,5.5) node{\tiny $\Ead{Y}{n}^1_{*,*}$};


        \draw[->,thick] (7,3)--(9,3);
        \draw (8,4) node{$\fa_*^1$};

        \filldraw[fill=black!40!yellow!95]
        (10,0)--(10,1)--(16,1)--(16,-4)--(15,-4)--(15,0)--cycle;
        \filldraw[fill=black!40!yellow!70]
        (11,1)--(11,2)--(16.05,2)--(16.05,-3)--(15.05,-3)--(15.05,1)--cycle;
        \filldraw[fill=black!40!yellow!50]
        (12,2)--(12,3)--(16.1,3)--(16.1,-2)--(15.1,-2)--(15.1,2)--cycle;
        \filldraw[fill=black!40!yellow!35]
        (13,3)--(13,4)--(16.15,4)--(16.15,-1)--(15.15,-1)--(15.15,3)--cycle;
        \filldraw[fill=black!40!yellow!20]
        (14,4)--(14,5)--(16.2,5)--(16.2,0)--(15.2,0)--(15.2,4)--cycle;
        \filldraw[fill=black!40!yellow!5]
        (16.25,6)--(16.25,1)--(15.25,1)--(15.25,6)--cycle;

        \draw[->, thick, black]  (10,0)--(10,7);
        \draw[->, thick, black]  (10,0)--(17,0);

        \draw (11,0)--(11,2); \draw (12,0)--(12,3); \draw
        (13,0)--(13,4); \draw (14,0)--(14,5);

        \draw (10,1)--(16.25,1); \draw (11,2)--(16.25,2); \draw
        (12,3)--(16.25,3); \draw (13,4)--(16.25,4); \draw (14,5)--(16.25,5);

        \draw (15,-1)--(16,-1); \draw (15,-2)--(16,-2); \draw (15,-3)--(16,-3);

        \draw (13,7) node{$\Ea{X}^1_{p,q}$};
        \draw (9,0.5) node{\tiny $\Ead{X}{0}^1_{*,*}$}; \draw (10,1.5) node{\tiny
        $\Ead{X}{1}^1_{*,*}$}; \draw (11,2.5) node{\tiny $\Ead{X}{2}^1_{*,*}$};
        \draw (14.2,5.5) node{\tiny $\Ead{X}{n}^1_{*,*}$};

    \end{tikzpicture}
\end{center}
\caption{Decomposition of spectral sequences into graded
components} \label{figSpecSeqLayers}
\end{figure}
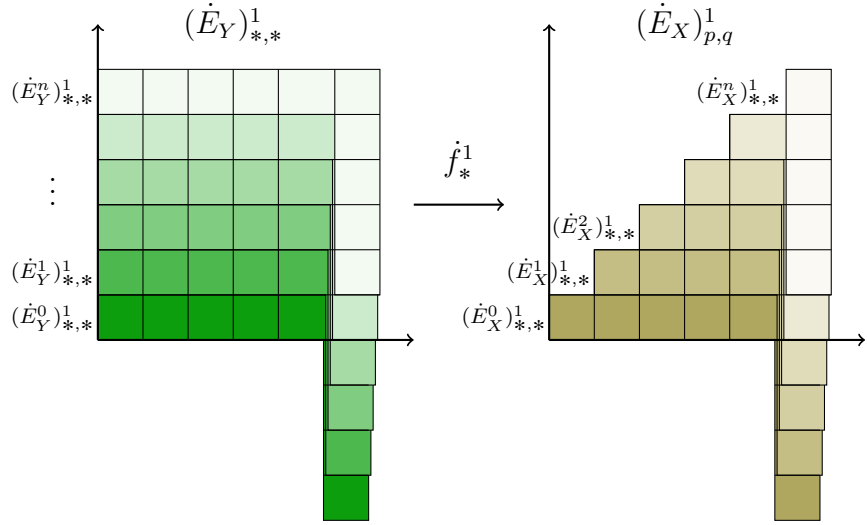

\begin{con}
The spectral sequence $\Ea{Y}^*$ splits in the direct sum of
spectral subsequences, indexed by degrees of exterior forms. For
$0\leqslant j\leqslant n$ consider the \ang-shaped spectral
sequence
\[
\Ead{Y}{j}^r_{p,q}=\Ea{Q}^r_{p,q-j}\otimes \Lambda_j.
\]
Clearly, $\Ea{Y}^r_{*,*}=\bigoplus_{j=0}^n\Ead{Y}{j}^r_{*,*}$.
This decomposition is sketched on Figure \ref{figSpecSeqLayers}.
Let $H_{i,j}(Y)$ denote the module $H_i(Q)\otimes\Lambda_j$. Then
$\Ead{Y}{j}^r_{p,q}\Rightarrow H_{p+q-j,j}(Y)$.

Let us construct the corresponding \ang-shaped spectral
subsequences in $\Ea{X}^*_{*,*}$. Consider the bigraded vector
subspaces $\Ead{X}{j}^1_{*,*}$:
\[
\Ead{X}{j}^1_{p,q}=\begin{cases} \Ea{X}^1_{p,q}, \mbox{ if } q=j
\mbox{ and } p<n;\\
0, \mbox{ if } q\neq j \mbox{ and } p<n;\\
H_{q+n-j}(Q,\dd Q)\otimes \Lambda_j, \mbox{ if } p=n.
\end{cases}
\]
In the last case we used the isomorphism of Theorem
\ref{thmEplus1struct}. Theorem \ref{thmEplus1struct} implies that
all differentials of $\Ea{X}^*_{*,*}$ preserve the subspace
$\Ead{X}{j}^*_{*,*}$, thus spectral subsequences
$\Ead{X}{j}^r_{*,*}$ are well defined for $r\geqslant 2$, and
$\Ea{X}^r_{*,*}=\bigoplus_{j=0}^n\Ead{X}{j}^r_{*,*}$.

Over a field, $H_k(X)$ can be identified with the associated
module $\bigoplus_{p+q=k} \E{X}^{\infty}_{p,q}$, and thus inherits
a double grading:
\[
H_k(X)\cong \bigoplus_{i+j=k} H_{i,j}(X),
\]
where
\[
H_{i,j}(X)\eqd\bigoplus_{p+q=i+j} \Ead{X}{j}^{\infty}_{p,q}.
\]
Thus we have $\Ead{X}{j}^r_{p,q}\Rightarrow H_{p+q-j,j}(X)$. The
map $\fa_*^r\colon \Ea{Y}^r\to\Ea{X}^r$ sends $\Ead{Y}{j}^r$ to
$\Ead{X}{j}^r$ for each $j\in\{0,\ldots,n\}$. The map $f_*\colon
H_*(Y)\to H_*(X)$ sends $H_{i,j}(Y)$ to $H_{i,j}(X)$.
\end{con}

\begin{thm}\label{thmHomolX}\mbox{}

\begin{enumerate}
\item If $i>j$, then $f_*\colon H_{i,j}(Y)\to
H_{i,j}(X)$ is an isomorphism. As a consequence, $H_{i,j}(X)\cong
H_i(Q)\otimes\Lambda_j$.

\item If $i<j$, then there exists an isomorphism $H_{i,j}(X)\cong H_i(Q,\dd
Q)\otimes \Lambda_j$.

\item In case $i=j<n$, the module $H_{i,i}(X)$ fits in the exact sequence
\[
0\rightarrow\Ea{X}^{\infty}_{i,i}\rightarrow H_{i,i}(X)\rightarrow
H_i(Q,\dd Q)\otimes\Lambda_i \rightarrow 0,
\]
or, equivalently,
\[
0\rightarrow \im\delta_{i+1}\otimes \Lambda_i\rightarrow
\Ea{X}^1_{i,i}\rightarrow H_{i,i}(X)\rightarrow H_i(Q,\dd
Q)\otimes\Lambda_i\rightarrow 0
\]

\item If $i=j=n$, then
\[
H_{n,n}(X)=\Ea{X}^{\infty}_{n,n}=\Ea{X}^1_{n,n}\cong H_n(Q,\dd Q).
\]
\end{enumerate}
\end{thm}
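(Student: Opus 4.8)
The plan is to prove all four parts by working, one exterior degree $j$ at a time, inside the \ang-shaped spectral subsequence $\Ead{X}{j}^*_{*,*}$, since by construction $H_{i,j}(X)=\bigoplus_{p+q=i+j}\Ead{X}{j}^{\infty}_{p,q}$. This subsequence is supported on the row $q=j$ (for $p\le n-1$) together with the column $p=n$, so all of its differentials run from a column term $\Ead{X}{j}^r_{n,j-r+1}$ to a row term $\Ead{X}{j}^r_{n-r,j}$, and by Theorem~\ref{thmEplus1struct}(3) such a differential is the connecting homomorphism $\delta_{q_1}\otimes\id$ with $q_1=n-r+1$, postcomposed with the injection $\fa_*^1$ exactly when the target is the diagonal, i.e.\ when $n-r=j$. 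Since these differentials have pairwise distinct domains and targets, the layer folds into a single exact sequence abutting to $H_{*,j}(X)$, whose entries are the column terms $\Ead{X}{j}^1_{n,q}=H_{q+n-j}(Q,\dd Q)\otimes\Lambda_j$, the off-diagonal row terms $H_p(\dd Q)\otimes\Lambda_j$ for $j<p<n$, and the diagonal term $\Ea{X}^1_{j,j}$, with maps $\delta_\bullet\otimes\id$ except at the two maps adjacent to the diagonal. The key point is that, away from the diagonal, this exact sequence coincides with the long exact sequence of the pair $(Q,\dd Q)$ tensored with $\Lambda_j$: in that range the map of spectral sequences $\fa_*$ identifies the $X$-layer with the $Y$-layer $\Ead{Y}{j}^r_{p,q}=\Ea{Q}^r_{p,q-j}\otimes\Lambda_j$, which by Proposition~\ref{propSpecSeqQForm} and Künneth is precisely that long exact sequence tensored with $\Lambda_j$, abutting to $H_i(Q)\otimes\Lambda_j$.

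Granting this, parts (1), (2), (4) come out immediately. For $i>j$ the two positions contributing to $H_{i,j}(X)$, namely $(i,j)$ and $(n,i+j-n)$, lie strictly off the diagonal; the differentials needed to compute their $E^{\infty}$-values either stay off the diagonal, or, in the single case $i=j+1$, are differentials computing a kernel whose target happens to be the diagonal, in which case postcomposing an isomorphically-mapped source with the injection $\fa_*^1$ does not change the kernel. Hence $\fa_*^{\infty}$ is an isomorphism on these two terms, so (finite filtration, field coefficients) $f_*\colon H_{i,j}(Y)\to H_{i,j}(X)$ is an isomorphism, and $H_{i,j}(Y)=H_i(Q)\otimes\Lambda_j$ by Künneth --- this is (1). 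For $i<j$ the row position $(i,j)$ vanishes ($q=j>p=i$), so the only contribution is the column term $H_i(Q,\dd Q)\otimes\Lambda_j$, which survives because its outgoing differential targets the vanishing position $(i-1,j)$; this is (2). For $i=j=n$ only the corner $(n,n)$ contributes, no differential touches it, and $\Ea{X}^1_{n,n}\cong H_n(Q,\dd Q)\otimes\Lambda_n\cong H_n(Q,\dd Q)$; this is (4).

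Part (3), the case $i=j<n$, is where the diagonal correction is visible. The column term at $(n,2i-n)$, equal to $H_i(Q,\dd Q)\otimes\Lambda_i$, survives (its outgoing differential targets the vanishing $(i-1,i)$), while the diagonal term $\Ea{X}^1_{i,i}$ receives the differential $\difa{X}^{n-i}$ from the column term $H_{i+1}(Q,\dd Q)\otimes\Lambda_i$ at $(n,2i-n+1)$; by Theorem~\ref{thmEplus1struct}(3) (the case $q_1-1=q_2$ with $q_1=i+1$) this differential is $\fa_*^1\circ(\delta_{i+1}\otimes\id)$, and since $\fa_*^1$ is injective on the diagonal its image is $(\im\delta_{i+1})\otimes\Lambda_i$. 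Thus $\Ea{X}^{\infty}_{i,i}=\Ea{X}^1_{i,i}/((\im\delta_{i+1})\otimes\Lambda_i)$ (this is the same whether computed in the layer $\Ead{X}{i}$ or in the full spectral sequence, since the whole diagonal term and the only differential hitting it lie in that layer). Because $i<n$, the diagonal contribution sits in filtration degree $i$, strictly below the surviving column contribution in degree $n$, which gives $0\to\Ea{X}^{\infty}_{i,i}\to H_{i,i}(X)\to H_i(Q,\dd Q)\otimes\Lambda_i\to 0$; splicing in $0\to(\im\delta_{i+1})\otimes\Lambda_i\to\Ea{X}^1_{i,i}\to\Ea{X}^{\infty}_{i,i}\to 0$ yields the equivalent four-term exact sequence.

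The soft content is just that each \ang-shaped layer is a long exact sequence, so I expect the main obstacle to be bookkeeping rather than anything deep: one must carefully match the page number $r=n-q_1+1$ of the differentials in Theorem~\ref{thmEplus1struct} with the two positions each connects inside a fixed layer, and separate the borderline cases $i=n$ (where the row and column positions of total degree $i+j$ collide into one) and $i-1=j$ (where a differential adjacent to the diagonal carries the extra factor $\fa_*^1$). The one genuinely non-formal ingredient is that $\fa_*^1$ is merely injective, not an isomorphism, on the diagonal; tracking this is exactly what turns part (3) into an honest (and not visibly split) extension, in contrast to the product statements of parts (1) and (2).
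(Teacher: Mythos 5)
Your proposal is correct and follows essentially the same route as the paper: both decompose the spectral sequence into the \ang-shaped layers $\Ead{X}{j}^*$, fold each layer into a long exact sequence, and compare with the corresponding layer of $\Ea{Y}^*$ using that $\fa_*^1$ is an isomorphism off the diagonal and injective on it. The only presentational difference is that the paper runs the five lemma and a diagram chase on the map of long exact sequences \eqref{eqTwoLongGrad}, whereas you compute the surviving $E^{\infty}$-terms position by position; the underlying ingredients and case analysis (including the borderline cases $i=j+1$ and $i=n$) are identical.
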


\begin{proof}
According to Theorem \ref{thmEplus1struct}, the map
$\fa_*^1\colon\Ead{Y}{j}^1_{i,q}\to\Ead{X}{j}^1_{i,q}$ is an
isomorphism if $i>j$ or $i=j=n$, and injective if $i=j$. For each
$j$ both spectral sequences $\Ead{Y}{j}$ and $\Ead{X}{j}$ are
\ang-shaped, thus fold in the long exact sequences:
\begin{equation}\label{eqTwoLongGrad}
\xymatrix{\ldots\ar@{->}[r]&\Ead{Y}{j}^1_{i,j}\ar@{->}[r]\ar@{->}[d]^{\fa_*^1}&
H_{i,j}(Y)\ar@{->}[r]\ar@{->}[d]^{f_*}&\Ead{Y}{j}^1_{n,i-n+j}
\ar@{->}[r]^{\difa{Y}^{n-i+1}}\ar@{->}[d]^{\fa_*^1}_{\cong}&
\Ead{Y}{j}^1_{i-1,j}\ar@{->}[r]\ar@{->}[d]^{f_*}&\ldots\\
\ldots\ar@{->}[r]&\Ead{X}{j}^1_{i,j}\ar@{->}[r]&
H_{i,j}(X)\ar@{->}[r]&\Ead{X}{j}^1_{n,i-n+j}\ar@{->}[r]^{\difa{X}^{n-i+1}}&
\Ead{X}{j}^1_{i-1,j}\ar@{->}[r]&\ldots}
\end{equation}
Application of five lemma in the case $i>j$ proves (1). For $i<j$,
the groups $\Ead{X}{j}^1_{i,j}$, $\Ead{X}{j}^1_{i-1,j}$ vanish by
dimensional reasons, thus $H_{i,j}(X) \cong \Ead{X}{j}^1_{n,i-n+j}
\cong \Ead{Y}{j}^1_{n,i-n+j} \cong H_i(Q,\dd Q)\otimes\Lambda_j$.
Case $i=j$ also follows from \eqref{eqTwoLongGrad} by a simple
diagram chase.
\end{proof}

In case of manifolds Theorem \ref{thmHomolX} reveals a bigraded
duality. If $Q$ is a nice manifold with corners, $Y=Q\times T^n$,
and $\lambda$ is a characteristic function over $\Zo$, then
$X=Y/\simc$ is a compact orientable topological manifold with the
locally standard torus action. In this case Poincare duality
respects the double grading.

\begin{prop}\label{propXmanifoldDuality}
Let $Q$ be a Buchsbaum manifold with corners and $X$ is a quotient
construction over $Q$ determined by a $\Zo$-characteristic
function. Then $H_{i,j}(X;\ko)\cong H_{n-i,n-j}(X;\ko)$ for any
field $\ko$.
\end{prop}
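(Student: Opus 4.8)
The plan is to deduce the bigraded Poincaré duality $H_{i,j}(X;\ko)\cong H_{n-i,n-j}(X;\ko)$ from ordinary Poincaré duality on the $2n$-manifold $X$ together with the explicit description of the bigraded pieces provided by Theorem~\ref{thmHomolX}. Over a field $\ko$, ordinary Poincaré duality gives $H_k(X;\ko)\cong H^{2n-k}(X;\ko)\cong H_{2n-k}(X;\ko)$. So the total dimensions match in the right way; the content of the proposition is that this isomorphism can be refined to respect the exterior-form grading. The strategy is to compute both sides of the claimed isomorphism explicitly in each of the ranges $i>j$, $i<j$, $i=j$ (with the extreme case $i=j=n$) using Theorem~\ref{thmHomolX}, invoke the duality $H_p(Q)\cong H_{n-p}(Q,\dd Q)$ (Poincaré--Lefschetz for the oriented manifold with corners $Q$) and $\Lambda_j\cong\Lambda_{n-j}$, and check that the resulting expressions agree.

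Concretely: for $i>j$, Theorem~\ref{thmHomolX}(1) gives $H_{i,j}(X)\cong H_i(Q)\otimes\Lambda_j$, and for the dual indices $n-i<n-j$, Theorem~\ref{thmHomolX}(2) gives $H_{n-i,n-j}(X)\cong H_{n-i}(Q,\dd Q)\otimes\Lambda_{n-j}$. Poincaré--Lefschetz duality $H_i(Q)\cong H_{n-i}(Q,\dd Q)$ and $\Lambda_j\cong\Lambda_{n-j}$ then identify the two. The symmetric case $i<j$ is the same computation read backwards. The delicate range is $i=j<n$: here Theorem~\ref{thmHomolX}(3) presents $H_{i,i}(X)$ only through a four-term exact sequence
\[
0\rightarrow \im\delta_{i+1}\otimes\Lambda_i\rightarrow \Ea{X}^1_{i,i}\rightarrow H_{i,i}(X)\rightarrow H_i(Q,\dd Q)\otimes\Lambda_i\rightarrow 0,
\]
so I would first compute $\dim H_{i,i}(X)$ from this sequence, using $\dim\Ea{X}^1_{i,i}=h'_{n-i}(S_Q)$ (Theorem~\ref{thmBorderManif}, in the manifold case), $\dim\im\delta_{i+1}=\dim H_{i+1}(Q,\dd Q)-\dim(\text{coker part})$, and the long exact sequence of $(Q,\dd Q)$ to express everything in Betti numbers of $Q$ and $\dd Q\simeq S_Q$. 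Then I would run the same computation for the dual index $n-i$ and check, via the Dehn--Sommerville symmetry $h''_k(S_Q)=h''_{n-k}(S_Q)$ and Poincaré duality for $S_Q$, that the dimensions agree; since the $i=j$ pieces sit in a single degree of the exterior algebra and the off-diagonal pieces have already been matched degreewise, equality of dimensions for each fixed $j$ suffices to conclude $H_{i,i}(X;\ko)\cong H_{n-i,n-i}(X;\ko)$. The extreme case $i=j=n$ is immediate: Theorem~\ref{thmHomolX}(4) gives $H_{n,n}(X)\cong H_n(Q,\dd Q)\cong\ko$ (orientability), dual to $H_{0,0}(X)\cong H_0(Q)\otimes\Lambda_0\cong\ko$.

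The main obstacle I anticipate is the bookkeeping in the diagonal case $i=j$: one must carefully extract $\dim H_{i,i}(X)$ from the exact sequence, correctly handle the image of $\delta_{i+1}$, and then verify that the combinatorial identity reducing to Dehn--Sommerville (i.e. $h''_k=h''_{n-k}$, already proved in the excerpt) together with $\br_p(S_Q)=\br_{n-1-p}(S_Q)$ yields exactly the matching of the two diagonal dimensions. A cleaner alternative, which I would mention, is to observe that the whole spectral sequence $\Ea{X}^*$ carries a pairing induced by Poincaré duality on $X$ — the filtration $X_0\subset\cdots\subset X_n$ is, up to a shift, dual to itself under cap product with the fundamental class, since $X_i$ consists of orbits of dimension $\leqslant i$ and $X\setminus X_{i-1}$ deformation retracts onto the appropriate piece — and that this pairing is compatible with the exterior grading because $f_*$ is; this would give the duality on the $\infty$-page in each bidegree simultaneously, hence on $\gr H_*(X)$, hence on $H_*(X)$ since $\ko$ is a field. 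Either route reduces the proposition to facts already established in the excerpt.
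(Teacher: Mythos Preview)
Your handling of the off-diagonal cases $i\neq j$ matches the paper exactly: Theorem~\ref{thmHomolX}(1),(2) together with Poincar\'e--Lefschetz duality for $Q$ and $\Lambda_j\cong\Lambda_{n-j}$ is precisely the paper's argument.

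For the diagonal case $i=j$, however, you work much harder than necessary. You propose to compute $\dim H_{i,i}(X)$ from the four-term exact sequence of Theorem~\ref{thmHomolX}(3), feed in $\dim\Ea{X}^1_{i,i}=h'_{n-i}(S_Q)$ from Theorem~\ref{thmBorderManif}, track $\dim\im\delta_{i+1}$ through the long exact sequence of $(Q,\dd Q)$, and then match the result against the analogous expression at index $n-i$ via Dehn--Sommerville and Poincar\'e duality on $S_Q$. This should work, but the bookkeeping you flag as the ``main obstacle'' is entirely avoidable. The paper simply observes that once the off-diagonal pieces are matched, ordinary Poincar\'e duality on the closed oriented $2n$-manifold $X$ forces the diagonal pieces to match as well: in each total degree $2i$ the off-diagonal summands of $H_{2i}(X)$ are already identified, in a dimension-preserving way, with the off-diagonal summands of $H_{2(n-i)}(X)$, so the remaining diagonal summands $H_{i,i}(X)$ and $H_{n-i,n-i}(X)$ must have equal dimension. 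Over a field that is the isomorphism. Your alternative route via a duality pairing on the filtration is also more structure than the argument needs.
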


\begin{proof}
When $i<j$, we have
\[
H_{i,j}(X)\cong H_i(Q,\dd Q)\otimes\Lambda_j\cong
H_{n-i}(Q)\otimes\Lambda_{n-j}\cong H_{n-i,n-j}(X),
\]
by the Poincare--Lefschetz duality applied to $Q$ and Poincare
duality applied to torus. The remaining isomorphism
$H_{i,i}(X)\cong H_{n-i,n-i}(X)$ now follows from the ordinary
Poincare duality in~$X$.
\end{proof}

\begin{rem}\label{remOrbifoldDuality}
If $X$ is determined by $\Qo$-characteristic function, then it is
a homology $\Qo$-manifold. In this case Proposition
\ref{propXmanifoldDuality} holds over $\Qo$.
\end{rem}

%
%
%
%
%
%
%

\section{One example with non-acyclic proper faces}\label{secExample}

Let $Q$ be the product of $S^1$ with the closed interval
$\mathbb{I}=[-1,1]\subset \Ro^1$. Then $Q$ is a nice manifold with
corners having two proper faces: $F_1=S^1\times \{-1\}$ and
$F_2=S^1\times \{1\}$. The faces are not acyclic, so the arguments
of the paper cannot be applied. Consider the $2$-torus $T^2$ with
a given coordinate splitting $T^2=T^{(\{1\})}\times T^{(\{2\})}$.

First, define the characteristic function $\lambda$ on $Q$ by
\[
\lambda(F_1)=T^{(\{1\})},\qquad \lambda(F_2)=T^{(\{2\})}.
\]
The corresponding manifold with locally standard action is
\[
X=(S^1\times\mathbb{I}\times T^2)/\simc=S^1\times(\mathbb{I}\times
T^2/\simc)=S^1\times \mathcal{Z}_{\mathbb{I}}\cong S^1\times S^3.
\]
Here $\mathcal{Z}_{\mathbb{I}}$ is the moment-angle manifold of
the interval $\mathbb{I}$, see \cite{BPnew} or \cite{DJ}.

Next, consider the characteristic function $\lambda'$ on $Q$
determined by
\[
\lambda'(F_1)=\lambda'(F_2)=T^{(\{1\})}.
\]
The corresponding manifold is
\[
X'=(S^1\times\mathbb{I}\times T^2)/\simc=S^1\times
T^{(\{2\})}\times(\mathbb{I}\times T^{(\{1\})}/\simc)\cong
S^1\times S^1\times S^2.
\]

This example shows that in general Betti numbers of $(Q\times
T^2)/\simc$ depend not only on $Q$, but may also depend on the
characteristic function. This is opposite to the situation when
proper faces are acyclic, as was shown in the paper.

\section*{Acknowledgements}
I am grateful to Shintaro Kuroki, from whom I knew about $h'$- and
$h''$-vectors and their possible connections with torus manifolds,
and to Mikiya Masuda for the motivating discussions.


\end{document}